\theoremstyle{plain}
\newtheorem{theorem}{Theorem}
\newtheorem{prop}[theorem]{Proposition}
\newtheorem{lemma}[theorem]{Lemma}
\newtheorem{coro}[theorem]{Corollary}
\theoremstyle{definition}
\newtheorem{remark}{Remark}
\newcommand{\ts}{\hspace{0.5pt}}
\newcommand{\nts}{\hspace{-0.5pt}}
\newcommand{\AAA}{\mathbb{A}}
\newcommand{\CC}{\mathbb{C}\ts}
\newcommand{\RR}{\mathbb{R}\ts}
\newcommand{\QQ}{\mathbb{Q}\ts}
\newcommand{\ZZ}{{\ts \mathbb{Z}}}
\newcommand{\SSS}{\mathbb{S}}
\newcommand{\TT}{\mathbb{T}}
\newcommand{\NN}{\mathbb{N}}
\newcommand{\XX}{\mathbb{X}}
\newcommand{\YY}{\mathbb{Y}}
\newcommand{\cA}{\mathcal{A}}
\newcommand{\cB}{\mathcal{B}}
\newcommand{\cD}{\mathcal{D}}
\newcommand{\cH}{\mathcal{H}}
\newcommand{\cP}{\mathcal{P}}
\newcommand{\cX}{\mathcal{X}}
\newcommand{\vL}{\varLambda}
\newcommand{\ii}{\mathrm{i}}
\newcommand{\dd}{\, \mathrm{d}}
\newtheorem{definition}{Definition}
\newcommand{\MM}{\mathcal{M}(G)}
\newcommand{\MCV}{\mathcal{M}_{C,V}(G)}
\newcommand{\MTB}{\mathcal{M}^{\infty}(G)}
\newcommand{\cMO}{\mathcal{M}(\XX)}
\newcommand{\cMT}{\mathcal{M}(\YY)}
\newcommand{\cPO}{\mathcal{P}(\XX)}
\newcommand{\cPGO}{\mathcal{P}_G (\XX)}
\newcommand{\Oomega}{(\XX,\alpha)}
\newcommand{\Ttheta}{(\YY,\beta)}
\newcommand{\LO}{L^2 (\XX,\mu)}
\newcommand{\LT}{L^2 (\YY,\nu)}
\newcommand{\cV}{\mathcal{V}}
\begin{document}

\title[Dynamical versus diffraction spectrum]
{Dynamical versus diffraction spectrum \\[2mm]
for structures with finite local complexity}

\author{Michael Baake}
\address{Fakult\"at f\"ur Mathematik, Universit\"at Bielefeld, \newline
\hspace*{\parindent}Postfach 100131, 33501 Bielefeld, Germany}
\email{mbaake@math.uni-bielefeld.de}

\author{Daniel Lenz}
\address{Fakult\"at f\"ur Mathematik, Universit\"at Jena, \newline
\hspace*{\parindent}Ernst-Abbe-Platz 2, 07743 Jena}
\email{daniel.lenz@uni-jena.de}

\author{Aernout van Enter}
\address{Johann Bernoulli Institute for Mathematics and
  Computer Science, \newline
 \hspace*{\parindent}University of Groningen,
  PO Box 407, 9700\ts AK Groningen, The Netherlands}
\email{a.c.d.van.enter@rug.nl}

\begin{abstract}
  It is well-known that the dynamical spectrum of an ergodic measure
  dynamical system is related to the diffraction measure of a typical
  element of the system. This situation includes ergodic subshifts
  from symbolic dynamics as well as ergodic Delone dynamical systems,
  both via suitable embeddings. The connection is rather well
  understood when the spectrum is pure point, where the two spectral
  notions are essentially equivalent. In general, however, the
  dynamical spectrum is richer.

  Here, we consider (uniquely) ergodic systems of finite local
  complexity and establish the equivalence of the dynamical spectrum
  with a collection of diffraction spectra of the system and certain
  factors. This equivalence gives access to the dynamical spectrum via
  these diffraction spectra. It is particularly useful as the
  diffraction spectra are often simpler to determine and, in many
  cases, only very few of them need to be calculated.
\end{abstract}

\maketitle

\section{Introduction}

Dynamical systems as defined by the translation action of locally
compact Abelian groups (LCAGs) form an important class of structures
whose classification is only partially known.  An important tool is
the dynamical spectrum, which was introduced in \cite{Koopman} and
then largely developed by von Neumann \cite{vNe}. It was used by
Halmos and von Neumann \cite{HvN} to achieve the classification of
ergodic systems with pure point dynamical spectrum up to metric
isomorphism, together with giving canonical representatives in terms
of group additions on compact Abelian groups; see \cite{CFS,EW} for
further details.

In the more general case of a system with mixed dynamical spectrum,
much less is known, although these spectra are practically relevant;
compare \cite{Wit} and references therein for recent examples, and
\cite{BG-review,BBM} for some theoretical counterpart.  The maximal
equicontinuous factor, also known as the Kronecker factor, is a
natural object for analysing the pure point part of the spectrum, but
it is totally blind to continuous spectral components.  In many
concrete examples, it seems advantageous to drop the demand of
equicontinuity and search for a maximal factor with pure point
spectrum, preferably of the same type. This will be a (generally not
one-to-one) cover of the Kronecker factor; see \cite{BGG} and
references therein for some recent results from tiling theory.  It is
known, however, that this approach is not always possible
\cite{Herning}, while it is very efficient when it works; see
\cite{GTM,BGG,squiral} for examples.

A different object, of physical origin and seemingly unrelated at
first sight, is the \emph{diffraction measure} $\widehat{\gamma}$ of a
translation bounded measure $\omega$ on an LCAG $G$. Here, $\omega$
may be viewed as a model of an individual many-particle configuration,
which we assume to be a typical representative of an (ergodic)
ensemble of such structures, so that all quantities under
consideration are well-defined.  Then, the measure $\widehat{\gamma}$
is the Fourier transform of the positive definite autocorrelation
measure $\gamma$ of $\omega$. Interestingly, a closer inspection shows
a deep connection between the diffraction measure of $\omega$ and the
dynamical spectrum of the orbit closure of $\omega$ under the
translation action of $G$.  This connection was exploited in
\cite{Dworkin} and led to the important equivalence theorem between
pure pointedness of dynamical and diffraction spectra for measure
dynamical systems \cite{LMS,Martin,BL,BLM,LS,LM-2}. The relevance of
measure dynamical systems stems from the fact that many dynamical
systems, such as subshifts from symbolic dynamics or Delone dynamical
systems, are naturally embedded into this class of dynamical
systems; compare \cite{BM,BL,LS}.

It was noticed quite early \cite{EM} that a similarly simple
correspondence cannot hold for systems with singular continuous
spectral components, and it was later shown that the same (negative)
conclusion is generally also true in the presence of absolutely
continuous parts \cite{BvE}. In these systems beyond the pure point
case, the dynamical spectrum is richer than the diffraction spectrum
(which is the group generated by the Fourier--Bohr spectrum of
$\gamma$ when $\widehat{\gamma}$ is a pure point measure; see
Eq.~\eqref{eq:FB} below for a precise definition). A main insight of
\cite{BvE} is the observation that, in the examples appearing in
\cite{EM,BvE} as well as in many other ones (compare \cite{TAO} and
references therein), the missing parts of the dynamical spectrum could
be reconstructed from the diffraction measures of suitable factors of
the original system.

The importance of factors is perhaps not surprising, for instance in
the light of Fraczek's theorem \cite{Frac} which asserts that, under
some mild assumption, the maximal spectral measure can be realised as
that of a continuous function; see also \cite{A}. This continuous
function gives rise to a factor where the correspondence between the
dynamical and the diffraction spectrum can be understood via a minor
variant of Dworkin's argument \cite{Dworkin}; see also
\cite{DM}. However, the factor obtained this way might have a rather
complicated structure, as it generally cannot be obtained from a
function of finite range; compare \cite{Herning}.  In particular, in
the case of symbolic dynamics, such a factor will generally not be
realised over a finite alphabet, but rather over the unit disc.  The
observation mentioned above indicates that there might be an
alternative path via a collection of factors, but then significantly
simpler ones.

Given this situation, it is a natural conjecture that, under
reasonable assumptions on the type of the dynamical system, the
dynamical spectrum is equivalent to the \emph{collection} (or union)
of diffraction spectra of the system and its factors, where the latter
should be of a similar kind as the system itself (or simpler). This
conjecture is also supported by the physical intuition that the
autocorrelation essentially is a $2$-point correlation, while
higher-order correlations may still contain important information on
the system. Many of these correlations are not seen by the diffraction
measure of the original system itself, but at least the generalised
$2$-point correlations (between the positions of local patterns, say)
should be accessible via suitable factors and their diffraction
measures.  Since all correlation functions together determine the
entire system (again under suitable assumptions; see \cite{LM}), the
above conjecture is plausible.  The present paper is centred around
this conjecture.

For systems with pure point spectrum, little new insight seems gained
at a first glance, as factors of such systems are pure point again
\cite{BL-2}. Also, as mentioned before, we have equivalence of pure
point diffraction and dynamical spectrum in those cases anyhow; see
\cite{BL,LS} and references therein. Still, as we shall see later,
factors can shed some light on the structure of extinctions.  In other
examples, however, even simple factors may reveal coherent order, such
as the period doubling chain (pure point) for the Thue--Morse chain
(with singular spectrum of mixed type \cite{Q,EM}). Here, the full
dynamical spectrum can be reconstructed from the diffraction measure
of the Thue--Morse chain and this one factor. In particular, one can
represent the maximal spectral measure this way, which is implicit
already in \cite{Q}.  Several other examples are treated in
\cite{TAO}; see also \cite{GTM,BGG,squiral} and references therein.

Below, we make the conjecture precise, and prove it for (uniquely
ergodic) systems of finite local complexity (FLC), which includes
symbolic dynamics on finite alphabets as well as FLC Delone dynamical
systems. After some preliminary material on notions and methods (in
Section~\ref{sec:prelim}), we treat those cases explicitly in one
section each.  While we focus on dynamical systems in $\RR^{d}$ in
those sections, a general abstract approach for the action of LCAGs is
presented in Section~\ref{sec:general}, which also opens a path to
drop the ergodicity and FLC assumptions. This is followed by some
concluding remarks.

\section{Terminology and background}\label{sec:prelim}

Consider a (possibly unbounded) measure $\omega$ on $\RR^{d}$, by
which we mean a continuous linear functional on the space
$C_{\mathsf{c}} (\RR^{d})$ of continuous functions with compact
support. The corresponding weak-$*$ topology is called the \emph{vague
  topology}. Due to the Riesz--Markov theorem, these measures can be
identified with the regular Borel measures on $\RR^{d}$. A measure
$\omega$ is called \emph{translation bounded} when $\sup_{t\in\RR^{d}}
\lvert \omega \rvert (t+K) < \infty$ holds for any compact
$K\subset \RR^{d}$; see \cite{BF,Hof,Martin,TAO} for background
material. Given $\omega$, the measure $\widetilde{\omega}$ is
defined by $\widetilde{\omega} (g) = \overline{\omega(\widetilde{g})}$
for $g\in C_{\mathsf{c}} (\RR^{d})$, with $\widetilde{g} (x) :=
\overline{g (-x)}$.

Given a (translation bounded) measure $\omega$ on $\RR^{d}$, its
\emph{autocorrelation measure} $\gamma^{}_{\omega}$, or
autocorrelation for short, is defined as
\begin{equation} \label{eq:def-auto}
     \gamma^{}_{\omega} \, := \, \omega \circledast \widetilde{\omega}
     \, := \lim_{r\to\infty} \frac{\omega|^{}_{r} * \widetilde{\omega|^{}_{r}}}
     {\mathrm{vol} (B_{r} (0))} \, ,
\end{equation}
where $\omega|^{}_{r}$ denotes the restriction of $\omega$ to the open
ball $B_{r} (0)$, and the limit is assumed to exist (no other
situation will be considered below). The volume-weighted convolution
$\circledast$ of two unbounded measures is sometimes referred to as
the \emph{Eberlein convolution}.  Note that the autocorrelation is
often called \emph{Patterson function} in crystallography
\cite{Cowley}, even though it is a measure in our setting. This
approach was introduced in \cite{Hof}; see \cite[Ch.~9]{TAO} for a
comprehensive exposition and \cite{BG-review} for an informal
summary. Since $\gamma^{}_{\omega}$ is positive definite by
construction, its Fourier transform $\widehat{\gamma^{}_{\omega}}$
exists \cite{BF} and is a positive measure. The latter is called
the \emph{diffraction measure} of $\omega$, which can be seen as
the generalisation of the \emph{structure factor} from
classical crystallography \cite{Cowley}.

Let us expand on the terminology around spectra by means of some
additional definitions.  A (translation-bounded) measure $\omega$ on
$\RR^{d}$ whose autocorrelation $\gamma = \omega \circledast
\widetilde{\omega}$ exists is called \emph{pure point diffractive}
when $\widehat{\gamma}$ is a pure point measure. In this case, the
supporting set
\begin{equation}\label{eq:FB}
   S^{}_{\mathrm{FB}} \, := \,
  \{k\in\RR^{d} \mid \widehat{\gamma} (\{ k\}) > 0 \}
\end{equation}
is known as the \emph{Fourier--Bohr spectrum} of $\gamma$. The set
$S^{}_{\mathrm{FB}}$ is also known as the set of Bragg peak locations
in the physics literature. It is (at most) a countable set, but might
(and generally will) be a dense subset of $\RR^{d}$. Note that
$S^{}_{\mathrm{FB}}$ need not be a group, due to the possibility of
extinctions \cite{Cowley,LM}.

Let $\omega$ be a translation bounded measure and consider $\XX :=
\overline{ \{ \delta^{}_{t} * \omega \mid t \in \RR^{d} \} }$, with
the closure taken in the vague topology. This defines a compact space
that gives rise to a measure-theoretic dynamical system
$(\XX,\RR^{d},\mu)$, with the translation action of $\RR^{d}$ and some
invariant measure $\mu$. The notion of the \emph{dynamical spectrum}
now emerges via the natural unitary (translation) action of $\RR^{d}$
on the Hilbert space $L^{2} (\XX,\mu)$; see \cite{CFS,Nad,Q} for
general background and \cite[App.~B]{TAO} for a brief summary. When
$L^{2} (\XX,\mu)$ possesses a basis of eigenfunctions for the
$\RR^{d}$-action, one speaks of a system with \emph{pure point
  dynamical spectrum}. Then, the set of eigenvalues forms a subgroup
of $\RR^{d}$, known as the \emph{pure point spectrum}. We are thus not
using the term `spectrum' in the sense of the topological spectrum
(which is closed), but in the sense of the set of eigenvalues (which
need not be closed as a set). More generally, when the eigenfunctions
are not total in $L^{2} (\XX,\mu)$, the group of eigenvalues
constitutes the pure point part of the dynamical spectrum, where the
spectral measures attached to the eigenfunctions all are pure point
measures. In particular, $(\XX,\RR^{d},\mu)$ has pure point spectrum
if and only if all spectral measures are pure point.

When $\omega$ is a pure point diffractive measure and $\mu$ is
ergodic, the dynamical spectrum of $(\XX,\RR^{d},\mu)$ is pure point
and can be characterised as the smallest subgroup of $\RR^{d}$ that
contains the supporting set $S^{}_{\mathrm{FB}}$ from
Eq.~\eqref{eq:FB}. We shall say more about this later; see also the
Appendix.  More generally, the positive diffraction measure
$\widehat{\gamma}$ has the unique decomposition
\[
    \widehat{\gamma} \, = \, \bigl(\widehat{\gamma}\bigr)_{\mathsf{pp}}
      + \bigl(\widehat{\gamma}\bigr)_{\mathsf{sc}}
      + \bigl(\widehat{\gamma}\bigr)_{\mathsf{ac}}
\]
into its pure point, singular continuous and absolutely continuous
components. Then, the Fourier--Bohr spectrum is the supporting set of
$\bigl(\widehat{\gamma}\bigr)_{\mathsf{pp}}$. As before, this set is a
countable (and possibly dense) subset of $\RR^{d}$.  In this more
general case, the dynamical spectrum is usually described via the
spectral decomposition theorem for unitary operators, hence via a
suitable collection of spectral measures of (preferably continuous)
functions on $\XX$, and with special emphasis on the spectral measure
of maximal type; compare \cite{Q} for a concise summary. This is
precisely the point of view we will be using below, in the sense that
we will relate the spectral measures of $(\XX,\RR^{d},\mu)$ with the
diffraction measure $\widehat{\gamma}$ of the system and its factors
of the same kind (to be made precise later). Further tools and methods
will be introduced while we proceed.

\section{The case of symbolic dynamics}
\label{sec:symbolic}

Let us begin with the simpler case of symbolic dynamics; see
\cite{LM-Book} for background. Recall that the full shift space
$\cA^{\ZZ}$ over a finite alphabet $\cA$ is compact in the usual
product topology. The latter is also known as the \emph{local
  topology}, because two elements $u,v \in \cA^{\ZZ}$ are close when
$u$ and $v$ agree on a large index range around $0$ (this defines both
a uniform structure and a metrisable topology).  For $u\in \cA^{\ZZ}$,
we write $u = (u_{n})^{}_{n\in\ZZ}$ and use $u^{}_{[m,n]} = u^{}_{m}
u^{}_{m+1} \ldots u^{}_{n}$, with $n\ge m$, for the finite subword
ranging from $m$ to $n$. In particular, $u^{}_{[m,m]} = u^{}_{m}$. The
shift $S$ acts on $\cA^{\ZZ}$ via $(Su)^{}_{n} := u^{}_{n+1}$, which
is continuous and invertible. In particular, $S$ induces a group
action by $\ZZ$, so that $(\cA^{\ZZ},\ZZ)$ is a topological dynamical
system.

Consider now a closed shift-invariant subset $\XX\subset\cA^{\ZZ}$,
which is then compact and known as a subshift, with the additional
property that $\XX$ admits only one shift-invariant probability
measure $\mu$. In other words, we assume that $(\XX,\ZZ,\mu)$ is a
measure-theoretic dynamical system which is uniquely ergodic. If
$\cA^{*}_{\XX}$ denotes the dictionary of $\XX$, by which we mean the
set of all finite words that occur as subwords of some element of
$\XX$, we know from Oxtoby's theorem \cite{Ox,Q} that unique
ergodicity is equivalent to the uniform existence, in any element of
$\XX$, of all frequencies of the words from $\cA^{*}_{\XX}$. All such
frequencies are strictly positive precisely when $\XX$ is also
minimal.  The frequency $\nu^{}_{w}$ of a non-empty word
$w\in\cA^{*}_{\XX}$ defines the measure of any of the corresponding
cylinder sets via $\mu \bigl( \{ x \in \XX \mid x^{}_{[m,m+ \vert w
  \rvert -1]} = w \} \bigr) = \nu^{}_{w}$, where $m\in\ZZ$ is arbitrary
and $\lvert w \rvert$ is the length of $w$. More complicated word
patterns are realised by suitable unions and intersections of
(elementary) cylinder sets. By construction and standard arguments,
this consistently defines a shift-invariant probability measure $\mu$
on $\XX$; see \cite{LM-Book}.

With $\mu$, one also has the Hilbert space $\cH = L^{2} (\XX,\mu)$,
with scalar product
\[
     \langle g \ts | h \rangle \, =  \int_{\XX}
     \overline{g(x)} \, h(x) \dd \mu (x) \ts ,
\]
written here in a way that is linear in the second argument. The shift
$S$ induces a unitary operator $U$ on $\cH$ via $Uf := f \circ S$, so
that $\bigl(Uf\bigr) (x) = f (Sx)$ for all $x\in\XX$.  Since $\XX$ is
compact, the continuous functions $C (\XX)$ are dense in $L^{2}
(\XX,\mu)$ by standard arguments \cite[Ch.~VII.5]{Lang}. The
characteristic function specified by a finite word $w\in\cA^{*}_{\XX}$
together with an index $n\in\ZZ$ is defined by $1^{}_{w,n} (x) = 1$
when $x^{}_{[n,n+\lvert w \rvert- 1]} = w$, and by $1^{}_{w,n} (x) = 0$
otherwise. Any such function is continuous, and all of them together
generate an algebra $\AAA (\XX)$ (under addition and multiplication)
that is dense in $C(\XX)$ by the Stone{\ts}--Weierstrass theorem
\cite[Thm.~III.1.4]{Lang}. It is not hard to see that
\begin{equation}\label{eq:A-def}
   \AAA (\XX) \, = \, \{ f\in C (\XX) \mid
   \text{ $f$ takes only finitely many values}\} \ts ,
\end{equation}
which provides an explicit characterisation.  Indeed, the inclusion
$\subset$ is obvious; the reverse inclusion $\supset$ follows because
any continuous function on $\XX$ with finitely many values is
determined by a finite `window' around $0$.

\smallskip

Given an arbitrary function $f\in \cH$, the map $n \mapsto \langle f
\ts | \ts U^{n} f \rangle$ defines a complex-valued, positive definite
function on the discrete group $\ZZ$, so that, by the
Herglotz--Bochner theorem \cite[Thm.~1.4.3]{Rudin}, there is a unique
positive measure $\sigma = \sigma^{}_{\! f}$ on the dual group
$\SSS^{1}$ (which is identified with the $1$-torus $\TT = \RR/\ZZ$
here) such that
\[
       \langle f \ts | \ts U^{n} f \rangle \, =
      \int_{0}^{1}  e^{2 \pi \ii n t} \dd \sigma^{}_{\! f} (t) \, .
\]
This measure $\sigma^{}_{\! f}$ is known as the \emph{spectral
  measure} of the function $f$.

Consider now an arbitrary, but fixed element $g \in \AAA (\XX)$
subject to the additional requirement that it takes values in $\{ 0,1
\}$ only. As $g\in \AAA (\XX)$, the value $g(x)$ is determined from a
finite index range, the latter being independent of $x\in\XX$.  Define
now the \emph{sliding block map} $\varPhi_{g} \! : \, \XX
\longrightarrow \{ 0,1 \}^{\ZZ}$ via $\bigl(\varPhi_{g} (x)\bigr) (n)
= g( S^{n} x)$ for $x\in\XX$ and $n\in\ZZ$; see \cite{LM-Book} for
background. Clearly, $\varPhi_{g}$ is a continuous map, wherefore $\YY
:= \varPhi_{g} (\XX) \subset \{ 0,1 \}^{\ZZ}$ is compact. Since the
diagram
\[
  \begin{CD}
    \XX @>S>> \XX  \\
    @V{\varPhi_{g}}VV     @VV{\varPhi_{g}}V \\
    \YY_{\phantom{\varrho}} @>S>> \YY
  \end{CD}
\]
is commutative, $(\YY,\ZZ)$ is a topological factor of $(\XX,\ZZ)$.
Moreover, $\mu$ induces a shift-invariant measure $\mu^{}_{\YY}$ on
$\YY$ via $\mu^{}_{\YY} (B) = \mu \bigl( \varPhi^{-1}_{g} (B)\bigr)$
for arbitrary Borel sets $B\subset \YY$. By an application of
\cite[Prop.~3.11]{DGS}, we know that $(\YY,\ZZ,\mu^{}_{\YY})$ is again
uniquely ergodic.

Let $x\in\XX$ be arbitrary but fixed, with $y=\varPhi_{g} (x) \in
\YY$, and consider the corresponding Dirac comb $\omega =
\sum_{n\in\ZZ} y_{n} \ts \delta_{n}$, which is a translation bounded
measure on $\RR$. It possesses the autocorrelation measure
$\gamma^{}_{\omega} = \omega \circledast \widetilde{\omega} = \eta \ts
\delta^{}_{\ZZ}$ with the coefficients
\begin{equation}\label{eq:eta-def}
   \eta (m) \, =  \lim_{N\to\infty} \frac{1}{2N+1}
   \sum_{n=-N}^{N} y_{n} \ts \overline{y_{n-m}} \, =
   \lim_{N\to\infty} \frac{1}{2N+1}  \sum_{n=-N}^{N}
   \overline{y_{n}}\ts y_{n+m} \ts ,
\end{equation}
which are written in the general form that also applies to complex
sequences (even though they are real here).  All limits exist due to
the unique ergodicity of $(\YY,\ZZ,\mu^{}_{\YY})$, wherefore we can
employ the stronger version of the ergodic theorem for the orbit
average of a continuous function (for instance the one defined by $y
\mapsto \overline{y^{}_{0} y^{}_{m}}\ts $), and $\gamma^{}_{\omega}$
is a positive definite measure. Its Fourier transform
$\widehat{\gamma^{}_{\omega}}$ thus exists, and is a positive measure
of the form $\widehat{\gamma^{}_{\omega}} = \varrho *
\delta^{}_{\ZZ}$, with $\varrho =
\widehat{\gamma^{}_{\omega}}\big|_{[0,1)}$. Equivalently, $\eta \! :
\, \ZZ \longrightarrow \RR$ is a positive definite function on $\ZZ$,
see \cite[Lemma~8.4]{TAO}, with representation $\eta(m) = \int_{0}^{1}
e^{2\pi \ii m t} \dd \varrho (t)$, where $\varrho$ is now interpreted
as a positive measure on the $1$-torus $\TT$.

Observe next that $y_{m} = g (S^{m} x)$, wherefore the coefficient
$\eta (m)$ can also be expressed as
\begin{equation}\label{eq:eta-formula}
   \eta(m) \, = \, \lim_{N\to\infty} \frac{1}{2N+1}
   \sum_{n=-N}^{N} \overline{g(S^{n} x)} \, g( S^{n+m} x)
   \, = \, \int_{\XX} \overline{g (x)}\, g(S^{m}x) \dd \mu (x)
   \, = \, \langle g | U^{m} g \rangle ,
\end{equation}
where the second equality is a consequence of unique ergodicity.  This
shows that $\varrho = \sigma^{}_{\! g}$, with $\sigma^{}_{\! g}$ the
spectral measure of $g$. In other words, the spectral measure of the
function $g$ occurs as the `building block' of the diffraction measure
of the factor that is defined via $\varPhi_{g}$. After this explicit,
but somewhat informal, introduction we can now develop the more
general structure.
\smallskip

Let $\XX \subset \cA^{\ZZ}$ be a subshift over the finite alphabet
$\cA$, and let $\cB$ be a finite set (equipped with the discrete
topology).  Then, any continuous $g \! : \, \XX \longrightarrow \cB$
gives rise to a continuous map $\varPhi_g \! :\, \XX \longrightarrow
\cB^{\ZZ}$, defined by $\bigl(\varPhi_g (x)\bigr) (n) := g (S^n x)$,
so that $\YY := \varPhi_g (\XX)$ is a factor of $\XX$. Moreover, any
subshift factor of $\XX$ over $\cB$ arises in this manner. This is a
variant of the Curtis--Lyndon--Hedlund theorem, compare
\cite[Thm.~6.2.9]{LM-Book}, which we formulate in our context as
follows.

\begin{lemma}\label{lem:factors}
  Let\/ $\XX$ be a subshift over the finite alphabet\/ $\cA$ and let\/
  $\YY$ be a subshift over the finite set\/ $\cB$ that is a factor
  of\/ $\XX$, with factor map\/ $\varPhi \! : \, \XX \longrightarrow
  \YY$.  Then, $\varPhi = \varPhi_g$ for\/ $g:= \delta \circ\varPhi$,
  where\/ $\delta \!  : \, \YY \longrightarrow \cB$ is defined by\/ $
  y \mapsto y(0)$ and where\/ $g$ is a continuous function that takes
  only finitely many values.
\end{lemma}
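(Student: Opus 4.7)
The plan is to unpack the definitions and use the defining property of a factor map, namely that it intertwines the shift actions on $\XX$ and $\YY$. Three things need to be verified: that $g = \delta \circ \varPhi$ is continuous, that it takes only finitely many values, and that $\varPhi = \varPhi_g$ pointwise.

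First, I would observe that $\delta \! : \YY \longrightarrow \cB$ is the restriction to $\YY$ of the coordinate projection $\cB^{\ZZ} \to \cB$ at index $0$. Since $\cB$ carries the discrete topology and $\cB^{\ZZ}$ the product topology, $\delta$ is continuous, and so $g = \delta \circ \varPhi$ is continuous as a composition of continuous maps. Because $\delta$ takes values in the finite set $\cB$, so does $g$; this immediately yields the membership $g \in \AAA(\XX)$ via the characterisation \eqref{eq:A-def}.

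Next, I would establish $\varPhi = \varPhi_g$ by computing both sides at an arbitrary $x \in \XX$ and an arbitrary $n \in \ZZ$. Unwinding definitions,
\[
   \bigl(\varPhi_g (x)\bigr)(n) \, = \, g(S^n x) \, = \, \delta\bigl(\varPhi(S^n x)\bigr)
   \, = \, \bigl(\varPhi(S^n x)\bigr)(0).
\]
Here the crucial input is that $\varPhi$ is a factor map, so it commutes with the shift, $\varPhi \circ S = S \circ \varPhi$, and hence $\varPhi(S^n x) = S^n \varPhi(x)$. Evaluating at $0$ gives $\bigl(S^n \varPhi(x)\bigr)(0) = \bigl(\varPhi(x)\bigr)(n)$, so $\bigl(\varPhi_g(x)\bigr)(n) = \bigl(\varPhi(x)\bigr)(n)$ for every $n$, whence $\varPhi_g(x) = \varPhi(x)$.

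This is essentially a direct verification, and I do not expect a substantive obstacle; the only point that requires a moment's thought is the observation that the sliding-block structure of $\varPhi_g$ is forced by the combination of (i) $\varPhi$ intertwining the shifts and (ii) the output coordinate at $n$ being determined by the $0$-th coordinate of $S^n \varPhi(x)$. The finite-range nature of $g$ (and hence the classical Curtis--Lyndon--Hedlund picture of a block code) then comes for free from the finiteness of $\cB$ together with the continuity of $g$, via \eqref{eq:A-def}.
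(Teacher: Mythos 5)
Your argument is correct and is essentially identical to the paper's own proof: both establish $\varPhi=\varPhi_g$ by the chain $g(S^nx)=\bigl(\varPhi(S^nx)\bigr)(0)=\bigl(S^n\varPhi(x)\bigr)(0)=\bigl(\varPhi(x)\bigr)(n)$ using the intertwining property, and both conclude the sliding-block nature of $\varPhi_g$ from the continuity and finite-valuedness of $g$. If anything, your version is written slightly more cleanly, since you start from $\bigl(\varPhi_g(x)\bigr)(n)$ and derive $\bigl(\varPhi(x)\bigr)(n)$ rather than presenting the identity as a closed loop.
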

\begin{proof}
  Since $\varPhi = \varPhi_{g}$ is a factor map, we can calculate
\[
  \bigl(\varPhi (x)\bigr) (n) \, = \, g (S^{n} x)
  \, = \, \bigl( \varPhi ( S^n x) \bigr) (0)
  \, = \, \bigl(S^n \varPhi (x) \bigr) (0)
  \, = \, \bigl( \varPhi (x) \bigr) (n) \, ,
\]
which implies the first claim. Since $g$ is continuous on $\XX$ by
construction, but takes only finitely many distinct values,
$\varPhi=\varPhi_{g}$ is indeed a sliding block map.
\end{proof}

This shows that subshift factors over finite sets are in one-to-one
correspondence to continuous functions that take finitely many values.
\smallskip

In view of the connection with diffraction, we now realise the
alphabet as a finite subset of $\CC$.  Let $\XX$ be a uniquely ergodic
subshift over the finite set $\cA \subset \CC$. Then, $\XX$ gives rise
to a canonical autocorrelation $\gamma = \gamma^{}_{\XX}$ as
follows. Consider the Dirac comb $\omega = \sum_n x_n \delta_n$ for an
arbitrary $x\in \XX$. Due to unique ergodicity, the associated
autocorrelation does not depend on $x$, hence effectively only on
$\XX$.  It is this observation that will later pave the way to a more
general (and abstract) approach.  Note that $\gamma^{}_{\XX}$ is a
positive definite measure of the form $\gamma^{}_{\XX} = \eta^{}_{\XX}
\ts \delta^{}_{\ZZ} := \sum_{m\in\ZZ} \eta^{}_{\XX}(m) \ts
\delta_{m}$, where positive definiteness of $\gamma^{}_{\XX}$ as a
measure on $\RR$ is equivalent to that of the function $\eta^{}_{\XX}
\! : \, \ZZ \longrightarrow \CC$; see \cite[Lemma~8.4]{TAO}. The
Fourier transform $\widehat{\gamma^{}_{\XX}}$ of $\gamma^{}_\XX$,
which exists by general arguments \cite{BF}, is a $1$-periodic measure
on $\RR$, as follows from \cite[Thm.~10.3]{TAO}; see also
\cite{Baa}. This gives
\[
   \widehat{\gamma^{}_{\XX}} \, = \,
     \varrho^{}_{\XX} * \delta^{}_{\ZZ} \ts ,
\]
with a finite positive measure $\varrho^{}_{\XX}$. The latter is not
unique in the sense that different $\varrho^{}_{\XX}$ can lead to the
same measure $\widehat{\gamma^{}_{\XX}}$.  A canonical choice is
$\varrho^{}_{\XX} = \widehat{\gamma^{}_{\XX}} \big|_{[0,1)}$, which is
based on the natural fundamental domain $\TT \simeq [0,1)$ of a
$\ZZ$-periodic structure.  This particular choice permits the
simultaneous interpretation of $\varrho^{}_{\XX}$ as a positive
measure on $\TT $, so that
\begin{equation}\label{eq:gen-eta-coeff}
     \eta^{}_{\XX} (m) \, =
     \int_{0}^{1} e^{2 \pi \ii mt} \dd \varrho^{}_{\XX} (t) \ts ,
\end{equation}
in line with the Herglotz--Bochner theorem.  We thus call
$\varrho^{}_{\XX}$ the \emph{fundamental diffraction} of
the subshift $\XX$.

\begin{prop}\label{prop:specmeas}
  Let\/ $\XX$ be a uniquely ergodic subshift over the finite
  alphabet\/ $\cA$. Let\/ $\cB\subset \CC$ be finite and\/ $g \! : \,
  \XX \longrightarrow \cB$ continuous, with spectral
  measure\/ $\sigma^{}_{\! g}$, and let\/ $\YY$ denote the
  associated subshift factor.  Then, the fundamental diffraction of\/
  $\YY$ satisfies\/ $\varrho^{}_{\YY} = \sigma^{}_{\! g}$.
\end{prop}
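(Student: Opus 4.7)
The plan is to run exactly the same computation as in the $\{0,1\}$-valued case leading to \eqref{eq:eta-formula}, but now for an arbitrary continuous function $g$ with values in a finite set $\cB\subset\CC$. First, I would fix any $x\in\XX$ and set $y=\varPhi_{g}(x)\in\YY$, so that $y^{}_{n} = g(S^{n} x)$. The canonical autocorrelation of $\YY$ has the form $\gamma^{}_{\YY}=\eta^{}_{\YY}\ts\delta^{}_{\ZZ}$ with
\[
   \eta^{}_{\YY}(m)\,=\,\lim_{N\to\infty}\frac{1}{2N+1}\sum_{n=-N}^{N}\overline{y^{}_{n}}\ts y^{}_{n+m}
   \,=\,\lim_{N\to\infty}\frac{1}{2N+1}\sum_{n=-N}^{N}\overline{g(S^{n} x)}\ts g(S^{n+m}x) \ts .
\]

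Next, I would observe that the function $h^{}_{m}\!:\,x\mapsto \overline{g(x)}\ts g(S^{m} x)$ is continuous on $\XX$ (since $g$ is continuous and $S^{m}$ is a homeomorphism) and bounded, so by unique ergodicity of $(\XX,\ZZ,\mu)$ and Oxtoby's theorem the Birkhoff averages of $h^{}_{m}$ converge uniformly in $x\in\XX$ to $\int_{\XX}h^{}_{m}\dd\mu$. This yields
\[
  \eta^{}_{\YY}(m)\,=\,\int_{\XX}\overline{g(x)}\ts g(S^{m} x)\dd\mu(x)\,=\,\langle g\ts|\ts U^{m} g\rangle \,=\, \int_{0}^{1}e^{2\pi\ii mt}\dd\sigma^{}_{\! g}(t) \ts ,
\]
where the last equality is the defining property of the spectral measure $\sigma^{}_{\! g}$ provided by the Herglotz--Bochner theorem. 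As a by-product, this shows that $\eta^{}_{\YY}(m)$ is independent of the choice of $x$, which is consistent with the definition of the canonical autocorrelation of $\YY$ and with the unique ergodicity of $\YY$ inherited from $\XX$ via \cite[Prop.~3.11]{DGS}.

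Finally, by \eqref{eq:gen-eta-coeff} we simultaneously have $\eta^{}_{\YY}(m)=\int_{0}^{1}e^{2\pi\ii mt}\dd\varrho^{}_{\YY}(t)$. Hence $\sigma^{}_{\! g}$ and $\varrho^{}_{\YY}$ are both finite positive measures on $\TT$ with identical Fourier coefficients, and by the uniqueness clause of the Herglotz--Bochner theorem (equivalently, by density of trigonometric polynomials in $C(\TT)$) they coincide. The only real subtlety is a bookkeeping matter: $\sigma^{}_{\! g}$ is an invariant of the $L^{2}$-function $g$ on the base system $\XX$, while $\varrho^{}_{\YY}$ is intrinsically attached to the factor $\YY$ through its diffraction. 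Unique ergodicity of $\XX$ is precisely the ingredient that identifies the two, and no further obstacle arises.
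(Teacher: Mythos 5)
Your proposal is correct and follows exactly the route the paper takes: the paper's proof of this proposition simply refers back to the derivation around Eqs.~\eqref{eq:eta-def} and \eqref{eq:eta-formula}, which is precisely the computation you have written out, extended verbatim from the $\{0,1\}$-valued case to a general finite $\cB\subset\CC$. Your explicit appeal to unique ergodicity for the uniform convergence of the Birkhoff averages and to the uniqueness clause of the Herglotz--Bochner theorem makes the argument complete.
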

\begin{proof} This follows exactly as in our previous derivation
around Eqs.~\eqref{eq:eta-def} and \eqref{eq:eta-formula}.
\end{proof}

Note that subsets of $\CC$ are natural objects in the context of
mathematical diffraction theory; see \cite[Ch.~9]{TAO} for
background. Subsets of $\RR$, $\QQ$ or $\ZZ$ are special cases and
also of interest. They are covered by Proposition~\ref{prop:specmeas}
as well. \smallskip

Let us now establish a link between the canonical shift-invariant
measure $\mu$ of $\XX$ (defined via its values on cylinder sets) and
the diffraction measures of the subshift factors.

\begin{prop}\label{prop:freq-back}
  Let\/ $\XX$ be a uniquely ergodic subshift over the finite
  alphabet\/ $\cA$. Let\/ $w$ be any finite word from\/
  $\cA^{*}_{\XX}$ and define\/ $g := 1^{}_{w,0}$ $($so $g (x) = 1$ if
  $w$ occurs in $x$ starting at $0$ and $ g(x) =0$ otherwise$\ts
  )$. Let\/ $\YY \subset \{ 0,1 \}^{\ZZ}$ be the subshift factor
  associated to $g$. Then, the absolute frequency\/ $\nu^{}_{w}$ of\/
  $w \in \cA^{*}_{\XX}$ is determined by the spectral measure\/
  $\sigma^{}_{\! g}$ of\/ $g$, via\/ $\nu^{}_{w} =
  \widehat{\sigma^{}_{\! g}} (0) = \eta^{}_{\YY} (0)$.
\end{prop}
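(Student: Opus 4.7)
My plan is to deduce the claim from Proposition~\ref{prop:specmeas} combined with a direct evaluation of both sides at the lag $m = 0$.

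First, I would invoke Proposition~\ref{prop:specmeas} to identify $\varrho^{}_{\YY} = \sigma^{}_{\!g}$. Applying the Herglotz--Bochner representation \eqref{eq:gen-eta-coeff} at $m = 0$ then yields
\[
   \eta^{}_{\YY} (0) \, = \int_{0}^{1} \dd \varrho^{}_{\YY} (t)
   \, = \, \varrho^{}_{\YY} (\TT) \, = \, \sigma^{}_{\!g} (\TT) \ts .
\]
Since the Fourier transform of a finite positive measure on $\TT \simeq [0,1)$, evaluated at $0 \in \ZZ$, is simply its total mass, one also has $\widehat{\sigma^{}_{\!g}} (0) = \sigma^{}_{\!g} (\TT)$. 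This establishes the second equality $\widehat{\sigma^{}_{\!g}} (0) = \eta^{}_{\YY} (0)$ in the claim.

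Next, I would evaluate $\sigma^{}_{\!g} (\TT)$ by returning to the defining relation of the spectral measure at $n = 0$. Since $U^{0}$ is the identity on $L^{2} (\XX,\mu)$, this gives
\[
   \sigma^{}_{\!g} (\TT) \, = \, \langle g \ts | \ts U^{0} g \rangle
   \, = \, \langle g \ts | \ts g \rangle \, =
   \int_{\XX} \lvert g (x) \rvert^{2} \dd \mu (x) \ts .
\]
Because $g = \one^{}_{w,0}$ takes only the values $0$ and $1$, we have $\lvert g \rvert^{2} = g$, so that
\[
   \int_{\XX} \lvert g (x) \rvert^{2} \dd \mu (x)
   \, = \int_{\XX} g (x) \dd \mu (x) \, = \,
   \mu \bigl( \{ x \in \XX \mid x^{}_{[0, \lvert w \rvert - 1]} = w \} \bigr)
   \, = \, \nu^{}_{w} \ts ,
\]
where the last equality is the definition of the canonical shift-invariant measure $\mu$ on elementary cylinder sets in terms of the word frequencies, as recalled at the beginning of Section~\ref{sec:symbolic}. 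Chaining these identities produces $\nu^{}_{w} = \widehat{\sigma^{}_{\!g}} (0) = \eta^{}_{\YY} (0)$.

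No genuine obstacle presents itself: the statement is essentially an unpacking of Proposition~\ref{prop:specmeas} at the lag $m = 0$, together with the idempotence $g^{2} = g$ of an indicator function. The only point worth emphasising explicitly is that the identification $\mu (\text{cylinder}) = \nu^{}_{w}$ rests on the construction of $\mu$ via word frequencies, which in turn relies on unique ergodicity through Oxtoby's theorem.
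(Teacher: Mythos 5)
Your proposal is correct and follows essentially the same chain of identities as the paper's own proof: $\widehat{\sigma^{}_{\!g}}(0) = \sigma^{}_{\!g}(\TT) = \langle g \ts|\ts g\rangle = \int_{\XX} g \dd\mu = \nu^{}_{w}$, using $\lvert g\rvert^{2}=g$ for the indicator and the identification of the cylinder measure with the word frequency, plus Proposition~\ref{prop:specmeas} and Eq.~\eqref{eq:gen-eta-coeff} for the equality with $\eta^{}_{\YY}(0)$. The only cosmetic difference is the order in which the equalities are assembled.
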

\begin{proof} The claim can be verified by a direct calculation,
\[
   \widehat{\sigma^{}_{\! g}}(0)\, = \int_{\TT} \dd \sigma^{}_{\! g}
   \, = \, \langle g \ts | \ts g \rangle
   \,  = \int_{\XX} |g|^2 \dd \mu \, =
   \int_{\XX} g \dd \mu \, = \, \nu^{}_{w} \, .
\]
Here, the penultimate step relies on $g$ being a characteristic
function, while the last step is an application of the ergodic
theorem, as in Eq.~\eqref{eq:eta-formula}. A comparison with
Proposition~\ref{prop:specmeas} and Eq.~\eqref{eq:gen-eta-coeff} shows
that one also has $ \widehat{\sigma^{}_{\! g}} (0) = \eta^{}_{\YY} (0)$.
\end{proof}

Let now $(\XX,\ZZ)$ be a uniquely ergodic subshift.  The cylinder sets
defined by finite words $w\in \cA^{*}_{\XX}$ form a $\pi$-system of
the Borel $\sigma$-algebra of $\XX$. Consequently, the frequencies of
the finite words uniquely and completely \emph{determine} a shift
invariant probability measure $\mu$ on $\XX$.  If $\XX$ is minimal,
then $\mu$ in turn determines $\XX$ (as $\XX$ is the support of
$\mu$).  In this situation, we call the measure-theoretic, strictly
ergodic subshift $(\XX,\ZZ,\mu)$ \emph{completely reconstructible}
from a collection of measures on $\TT$ if the frequency of any word
can be determined from the Fourier coefficient at $0$ of a suitable
measure from the collection.  Our findings so far can be summarised as
follows.

\begin{theorem}\label{thm:symbolic}
  Let\/ $\XX$ be a uniquely ergodic subshift over the
  finite alphabet\/ $\cA$. Then, the following properties hold.
\begin{enumerate}
\item The fundamental diffraction of any subshift factor of\/
  $\XX$ over a finite\/ $\CC$-valued alphabet is a spectral measure
  of\/ $\XX$.
\item Any spectral measure of the form\/ $\sigma^{}_{\! g}$ with\/ $g$
  from the dense subspace\/ $\AAA (\XX)$ of\/ $L^2 (\XX,\mu)$ arises
  as the fundamental diffraction measure of a subshift factor over a
  finite\/ $\CC$-valued alphabet.
\item If $\XX$ is also minimal, $(\XX,\ZZ,\mu)$ is completely
  reconstructible from the fundamental diffraction measures of the
  collection of subshift factors with $\{0,1\}$-valued alphabets
  $\,($under the assumption that one knows the factor maps as
  well$\,)$.
 \end{enumerate}
\end{theorem}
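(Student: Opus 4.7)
The plan is to assemble the three assertions directly from the preparatory material already established, most notably Lemma~\ref{lem:factors}, Proposition~\ref{prop:specmeas} and Proposition~\ref{prop:freq-back}, with only minor bookkeeping in each part.

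For part (1), I would argue as follows. Let $\YY$ be a subshift factor of $\XX$ over a finite alphabet $\cB\subset\CC$, with factor map $\varPhi\!:\XX\longrightarrow\YY$. By Lemma~\ref{lem:factors}, one can write $\varPhi=\varPhi_{g}$ for a continuous function $g\!:\XX\longrightarrow\cB$ that takes only finitely many values; in particular, $g\in L^{2}(\XX,\mu)$ since $\XX$ is compact and $g$ bounded. Proposition~\ref{prop:specmeas} then yields $\varrho^{}_{\YY}=\sigma^{}_{\!g}$, which is by definition a spectral measure of $(\XX,\ZZ,\mu)$.

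For part (2), I would invoke the explicit characterisation in Eq.~\eqref{eq:A-def}. Any $g\in\AAA(\XX)$ is a continuous function on $\XX$ taking only finitely many values, so its range $\cB:=g(\XX)$ is automatically a finite subset of $\CC$. Consequently, $\varPhi_{g}$ is a continuous, shift-equivariant map from $\XX$ onto the compact shift-invariant subset $\YY:=\varPhi_{g}(\XX)\subset\cB^{\ZZ}$, which is therefore a subshift factor of the desired form. Applying Proposition~\ref{prop:specmeas} to this $g$ then gives $\sigma^{}_{\!g}=\varrho^{}_{\YY}$, as claimed.

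For part (3), the key input is Proposition~\ref{prop:freq-back}. For every finite word $w\in\cA^{*}_{\XX}$, the indicator $g^{}_{w}:=1^{}_{w,0}$ belongs to $\AAA(\XX)$, takes values only in $\{0,1\}$, and, by the argument just used in (2), generates a subshift factor $\YY^{}_{w}\subset\{0,1\}^{\ZZ}$ with fundamental diffraction $\varrho^{}_{\YY^{}_{w}}=\sigma^{}_{\!g^{}_{w}}$. By Proposition~\ref{prop:freq-back}, the zeroth Fourier coefficient of this measure recovers the frequency via $\widehat{\sigma^{}_{\!g^{}_{w}}}(0)=\nu^{}_{w}=\eta^{}_{\YY^{}_{w}}(0)$, which matches the reconstructibility criterion stated just before the theorem. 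Since the cylinder sets defined by finite words form a $\pi$-system generating the Borel $\sigma$-algebra on $\XX$, the collection $\{\nu^{}_{w}\}^{}_{w\in\cA^{*}_{\XX}}$ determines $\mu$ uniquely, and minimality of $\XX$ then ensures $\XX=\supp(\mu)$, so that $(\XX,\ZZ,\mu)$ is fully recovered.

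There is no real technical obstacle here; the only conceptual subtlety to flag is the proviso that the factor maps are part of the data. Without this, the bare collection of fundamental diffraction measures would yield only an unlabelled list of numbers $\widehat{\sigma^{}_{\!g^{}_{w}}}(0)$ and would not suffice to assign each frequency to the correct word $w$; having the factor maps $\varPhi_{g^{}_{w}}$ available is exactly what pins down this correspondence.
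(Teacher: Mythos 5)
Your proposal is correct and follows essentially the same route as the paper: part (1) via Lemma~\ref{lem:factors} and Proposition~\ref{prop:specmeas}, part (2) via the characterisation of $\AAA(\XX)$ as the finitely-valued continuous functions together with Proposition~\ref{prop:specmeas}, and part (3) via Proposition~\ref{prop:freq-back} and the $\pi$-system of cylinder sets. Your closing remark on why the factor maps must be part of the data matches the proviso in the statement and is consistent with the paper's discussion.
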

\begin{proof} As shown in Lemma~\ref{lem:factors}, any subshift factor
  over a finite set emerges from a function $g \in \AAA (\XX)$. Now,
  the first claim follows from Proposition~\ref{prop:specmeas}.

  To prove the second claim, we recall that $\AAA (\XX)$ is dense in
  $C (\XX)$ by the Stone--Weierstrass theorem, and hence also dense in
  $L^2 (\XX,\mu)$.  Then, the remaining part of the claim follows from
  Proposition~\ref{prop:specmeas}.

  As already discussed just before the theorem, any strictly ergodic
  subshift is completely determined by the (positive) frequencies of
  its finite subwords. The corresponding shift invariant measure $\mu$
  is given by its values on the $\pi$-system of cylinder sets defined
  by the finite subwords. Since we assume the knowledge of the factor
  maps, any such frequency can be extracted as the Fourier coefficient
  $\widehat{\sigma^{}_{\! g}} (0)$ with a $\{0,1\}$-valued function
  $g$, as shown in Proposition~\ref{prop:freq-back}. This proves the
  third claim.
\end{proof}

\begin{remark}
  We distinguish $(\XX,\ZZ)$ and $(\XX,\ZZ,\mu)$ at this point, in the
  sense that the knowledge of the former, even if it is known to be
  uniquely ergodic, does not provide the invariant measure
  \emph{explicitly}. Of course, in the uniquely ergodic case, the
  measure $\mu$ is determined via the word frequencies, and the latter
  emerge from uniformly converging limits (averages). However, this
  does not provide their concrete values. Two notable exceptions have
  been studied in the literature, namely shift spaces that are defined
  via primitive substitutions (compare \cite{Q,TAO} and references
  therein), where the frequencies are available via Perron--Frobenius
  theory, and shift spaces that emerge from the projection method (see
  \cite{TAO} for details), where the frequencies are given by certain
  integrals.  The use of a suitable system of factors, as discussed
  above, contains both cases and extends them to a setting that is
  independent of substitutions or projections.
\end{remark}

\begin{remark}\label{rem-2}
  A spectral measure $\sigma$ is called \emph{maximal} if any other
  spectral measure of the same dynamical system is absolutely
  continuous with respect to $\sigma$.  In general, it not true
  (compare \cite{Herning} and Remark~\ref{rem-4})
  that the maximal spectral measure of a subshift can be realised as
  the fundamental diffraction of a subshift factor. However, the
  theorem opens up the possibility to construct a measure equivalent
  to a maximal spectral measure via diffractions of factors. To do
  so, one chooses a countable subset $\mathcal{D}$ of $\AAA (\XX)$,
  which is dense in $C(\XX)$ and hence in $L^2 (\XX,\mu)$.  Now, part
  (2) of Theorem~\ref{thm:symbolic} implies that, for any $f\in
  \mathcal{D}$, the fundamental diffraction $\varrho^{}_{\nts f}$ of the
  subshift factor associated to $f$ is just the spectral measure
  $\sigma^{}_{\! f}$. If $\{ f_n \mid n\in \NN \}$ is an enumeration of
 the elements of $\mathcal{D}$, the measure
\[
   \varrho \, := \sum_{n=1}^{\infty}
   \frac{1}{ 2^n ( 1 + \varrho^{}_{\nts f_n} (\TT))}\, \varrho^{}_{\nts f_n}
\]
is equivalent to the maximal spectral measure, meaning that it has the
same null sets. Indeed, $\varrho$ is absolutely continuous with
respect to any maximal spectral measure as any $\varrho^{}_{\nts f}$
is a spectral measure. Conversely, for any $h \in L^2 (\XX,\mu)$, we
can find a sequence $(h_n)^{}_{n\in\NN}$ in $\mathcal{D}$ that
converges to $h$ (due to the denseness of $\mathcal{D}$).  Then,
$\varrho^{}_{h_n} \! = \sigma^{}_{h_n} $ converges to $\sigma^{}_{\nts
  h}$ in the sense that $\varrho^{}_{h_n} (A) \xrightarrow{\,
  n\to\infty \,} \sigma^{}_{\nts h} (A)$ for any measurable $A\subset
\TT$.  Consequently, $\sigma^{}_{\nts h} $ must be absolutely
continuous with respect to $\varrho$.
\end{remark}

\begin{remark}\label{rem-3}
  It is not hard to see that arbitrarily close to any $g\in \AAA
  (\XX)$ one can find a function $g'$ such that the factor associated
  to $g'$ is actually a \emph{conjugacy}. This means that one can
  construct a spectral measure out of the diffractions of topological
  conjugacies along the lines indicated in Remark~\ref{rem-2}.  In
  particular, the collection of diffractions of all topologically
  conjugate subshifts is then equivalent to the dynamical spectrum of
  the original system. This ties in well with the fact that the
  dynamical spectrum is an invariant under conjugacy, whereas the
  diffraction measure is not. More specifically, this corroborates
  that the `obvious' invariant created from diffraction by collecting
  the diffraction measures of all conjugate systems is indeed
  equivalent to the dynamical spectrum of the initial system.
\end{remark}

\begin{remark}\label{rem-4}
  Some classic subshifts were mentioned in the Introduction, including
  the Thue--Morse chain and its generalisations; compare \cite{BG-TM,GTM}
  and references therein. Other cases include random dimers \cite{BvE}
  or the Rudin--Shapiro chain \cite{TAO}. The unifying property of
  these examples is that one needs just \emph{one} specific factor to
  complete the picture. However, it was recently shown in
  \cite{Herning} that this is not always the case, in the sense that
  there are examples where one really needs to consider infinitely
  many (sliding block) factors, each of them being periodic, to cover
  the entire pure point part of the dynamical spectrum. Only all of
  them together thus replace the knowledge obtainable from Fraczek's
  factor.
\end{remark}

Our exposition of the case of symbolic dynamics has an obvious
extension to block substitutions (or lattice substitutions) in higher
dimensions, where one deals with (uniquely ergodic) dynamical systems
$(\XX,\ZZ^{d},\mu)$ in an analogous way; compare
\cite{Robbie,NF,squiral} and references therein. Since this extension
is straight-forward, we leave the explicit formulation to the
reader. For recent examples, we refer to \cite{squiral,BGG}.

More complex is the situation for Delone dynamical systems, which we
need to describe in a geometric setting.  In particular, we now have
to deal with the continuous translation action of the group $\RR^{d}$
(rather than $\ZZ^{d}$).

\section{Delone dynamical systems}
\label{sec:Delone}

Let $\vL\subset \RR^{d}$ be a point set of finite local complexity
(FLC). By Schlottmann's characterisation \cite[Sec.~2]{Martin}, the
latter property means that $\vL - \vL = \{ x-y \mid x,y \in \vL \}$ is
a locally finite set. For FLC sets, the (continuous) hull is defined
as
\begin{equation}\label{eq:def-hull}
   \XX (\vL) \, := \, \overline{ \{ t+\vL \mid t\in \RR^{d} \} }\ts ,
\end{equation}
where the closure is taken in the local topology. Here, two FLC sets
are $\varepsilon$-close (for small $\varepsilon$ say) when they agree
on a centred ball of radius $1/\varepsilon$, possibly after shifting
one set by an element $t\in B_{\varepsilon} (0)$.  Note that the hull
from Eq.~\eqref{eq:def-hull} is compact as a result of the FLC
property \cite{Martin,BL}. An important subset is given by
\[
   \XX_{0} (\vL) \, := \, \{ \vL' \in \XX (\vL)
   \mid  0 \in \vL' \}\ts ,
\]
which is also known as the \emph{discrete} (or punctured) hull or
transversal. We now assume that $\vL$ is Delone, hence certainly not a
finite set, and that the topological dynamical system
$(\XX(\vL),\RR^{d})$ is uniquely ergodic, with invariant probability
measure $\mu$. This, in turn, induces a unique probability measure
$\mu^{}_{0}$ on $\XX^{}_{0} (\vL)$, which (again by Oxtoby's theorem
\cite{Ox}, see \cite{MR} and \cite{FR} for a general formulation in
the context of Delone sets) is given via the relative patch
frequencies as the measures of the corresponding cylinder sets. Here,
the term `relative' refers to the definition of the frequency per
point of $\vL$, not per unit volume of $\RR^{d}$.  The system is
strictly ergodic (meaning uniquely ergodic and minimal) if and only if
the frequencies of all legal patches exist uniformly and are strictly
positive.

Below, we first approach the factors in a way that is suggested by the
situation in the symbolic case, hence by identifying certain patches
and working with their locator (or repetition) sets. To establish the
connection with diffraction, we will then need some smoothing (via the
convolution with a continuous function of small support), because we
are now working with the translation action of $\RR^{d}$. Viewing
point sets as `equivalent' to measures (via their Dirac comb), we
will be led to a more general (and perhaps also more natural) approach
via measures.  \smallskip

If $K\subset \RR^{d}$ is a compact neighbourhood of $0\in\RR^{d}$, we
call the finite sets of the form $P=(\vL - x)\cap K$, with $x\in \vL$,
the \emph{$K$-clusters} of $\vL$. As they are defined, $K$-clusters
are non-empty, and always contain the point $0$ (as its reference
point, say). This definition avoids certain trivial pathologies
that emerge when the empty cluster is included.

Let $P$ be a $K$-cluster of $\vL$. For any $\vL' \in \XX (\vL)$, the
set of $K$-clusters of $\vL'$ is a subset of the $K$-clusters of
$\vL$, as a consequence of the construction of the hull $\XX
(\vL)$. We may thus define the \emph{locator set}
\[
   T^{}_{K,P} (\vL') \, := \, \{ t \in \RR^{d} \mid
    (\vL' - t)\cap K = P \}
   \, = \, \{ t \in \vL' \mid (\vL' - t)\cap K = P \}
   \, \subset \, \vL' \ts ,
\]
which contains the cluster reference points of all occurrences of $P$
in $\vL'$. Note that the second equality follows from our definition
of a cluster. Clearly, $T^{}_{K,P} (\vL') \subset \vL'$ inherits the
FLC property, though it need not be a Delone set (the Delone
property is guaranteed if $\XX (\vL)$ is minimal). If we now set 
\begin{equation}\label{eq:Y-def}
   \YY\, = \, \YY^{}_{K,P} \, := \,
   \{ T^{}_{K,P} (\vL') \mid \vL' \in \XX (\vL) \} \ts ,
\end{equation}
we obtain a topological factor of $\XX (\vL)$.  This follows from the
observation that the mapping $\vL' \mapsto T^{}_{K,P} (\vL')$ is
continuous in the local topology and commutes with the translation
action of $\RR^{d}$, since $T^{}_{K,P} (t+\vL') = t+ T^{}_{K,P}
(\vL')$. We call any factor of this type a \emph{derived factor} of
$(\XX,\RR^{d},\mu)$, and the collection of all of them the \emph{set
  of derived factors}.  Note that, in our case at hand, $\mu$ induces
a unique invariant probability measure $\mu^{}_{\YY}$ on $\YY$, so
that $(\YY,\RR^{d},\mu^{}_{\YY})$ is also uniquely ergodic, again by
an application of \cite[Prop.~3.11]{DGS}; see also
Proposition~\ref{prop:transfer} below. This setting will later be
generalised beyond (unique) ergodicity in Section~\ref{sec:general}.

In view of this situation, we may employ $\vL$ itself, together with
its image in $\YY$, to analyse the factor and its properties. To this
end, consider the Dirac comb $\omega = \delta^{}_{T_{K,P} (\vL)}$,
which is a translation bounded measure by construction. Its
autocorrelation measure $\gamma^{}_{\omega}$ exists, due to (unique)
ergodicity, and reads $\gamma^{}_{\omega} = \omega \circledast
\widetilde{\omega} = \sum_{z\in \vL - \vL} \eta^{}_{K,P} (z) \ts \delta_{z}$,
with
\begin{equation}\label{eq:eta-KP}
    \eta^{}_{K,P} (z) \, = \, \lim_{R\to\infty}
    \frac{1}{\text{vol} (B_{R} (0))}\,
    \text{card} \bigl( (T^{}_{K,P} (\vL) \cap B_{R} (0))
     \cap (z + T^{}_{K,P} (\vL))\bigr).
\end{equation}
Note that we have used \cite[Lemma~1.2]{Martin} for the derivation of
this expression. In particular, $\gamma^{}_{\omega}$ is a pure point
measure with support in $\vL - \vL$, which is a locally finite subset
of $\RR^{d}$. Moreover, the coefficient $\eta(0)$ is the density of
the set $T^{}_{K,P} (\vL)$, which equals the absolute frequency of the
$K$-cluster $P$ in $\vL$ by construction.  Consequently,
$\eta^{}_{K,P}(0) / \text{dens} (\vL)$ is the relative frequency of
the cluster $P$ within $\vL$. Recall that the diffraction measure
$\widehat{\gamma^{}_{\omega}}$ contains a point (or Dirac) measure at
$0$, whose intensity $I(0)$ satisfies $I(0) =
\bigl(\eta^{}_{K,P}(0)\bigr)^{2}$; see \cite[Cor.~9.1]{TAO}. This
gives access to the relative frequency of $P$.

The situation is thus as follows.  Knowing the diffraction measures of
all derived factors of $(\XX,\RR^{d},\mu)$ means knowing their
autocorrelations. If one also knows the corresponding pairs $(K,P)$,
one can then extract all cluster frequencies, and hence the measure
$\mu^{}_{0}$ on $\XX_{0} (\vL)$. The measure $\mu$ on $\XX (\vL)$ is
uniquely determined from $\mu^{}_{0}$ by standard methods, compare
\cite{TAO} and references therein (in some cases, and for $d=1$ in
particular, this can be seen via the suspension as a special flow,
compare \cite{CFS,EW}). The family of these factors thus permits a
reconstruction of the measure-theoretic dynamical system $(\XX (\vL),
\RR^{d}, \mu)$, and hence its dynamical spectrum, at least in an
abstract sense. In fact, viewing $\vL$ as an example of an $(r,R)$-set
with packing radius $r$ and covering radius $R$, the factor maps
select the list of possible clusters in $\vL$, and the diffraction of
a factor then gives the corresponding cluster frequency.  We have thus
shown the following result.

\begin{prop}
  Let\/ $\vL$ be an FLC Delone set such that its hull\/ $\XX=\XX(\vL)$
  defines a uniquely ergodic dynamical system under the translation
  action of\/ $\RR^{d}$. Then, the cluster frequencies can directly be
  computed from the diffraction measures of all locator sets of finite
  clusters within\/ $\vL$.  Any explicit knowledge of a generating set
  of clusters, together with their frequencies as extracted from the
  diffraction measure of the corresponding factors, explicitly
  specifies the invariant measure and thus the measure-theoretic
  dynamical system\/ $(\XX,\RR^{d},\mu)$.  \qed
\end{prop}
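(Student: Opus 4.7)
The proposition is essentially a summary of the discussion in the preceding paragraphs, so the plan is to organise that discussion into a clean argument, extracting cluster frequencies from the diffraction intensities at $0$ and then passing from frequencies to the invariant measure.

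First, I would fix a compact neighbourhood $K \subset \RR^{d}$ of $0$ and a $K$-cluster $P$ of $\vL$. By the construction in Eq.~\eqref{eq:Y-def}, the locator set $T^{}_{K,P} (\vL)$ gives rise to the derived factor $\YY^{}_{K,P}$, and by unique ergodicity (invoking \cite[Prop.~3.11]{DGS}) the factor system $(\YY^{}_{K,P},\RR^{d},\mu^{}_{\YY})$ is itself uniquely ergodic. Passing to the Dirac comb $\omega = \delta^{}_{T^{}_{K,P} (\vL)}$, unique ergodicity forces the orbit averages in Eq.~\eqref{eq:eta-KP} to converge uniformly, so the autocorrelation $\gamma^{}_{\omega} = \sum_{z\in \vL-\vL} \eta^{}_{K,P} (z) \ts \delta^{}_{z}$ exists, is independent of the chosen representative, and is a genuine invariant of the derived factor.

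Next, I would use the standard Bragg-peak formula \cite[Cor.~9.1]{TAO}: the intensity of the Dirac component of $\widehat{\gamma^{}_{\omega}}$ at $k=0$ equals $\bigl(\eta^{}_{K,P}(0)\bigr)^{2}$. Since $\eta^{}_{K,P}(0)$ is the density of $T^{}_{K,P} (\vL)$, and this density equals the absolute frequency of the cluster $P$ in $\vL$, one recovers the cluster frequency as the positive square root of the intensity read off from the diffraction measure of the derived factor $\YY^{}_{K,P}$. This proves the first assertion, provided the pair $(K,P)$ is known together with the diffraction measure.

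For the second assertion I would argue as in the paragraphs preceding the proposition: the cluster frequencies determine the values of $\mu^{}_{0}$ on all cylinder sets of the transversal $\XX^{}_{0} (\vL)$, since these cylinder sets are precisely indexed by clusters $P$ of the form $(\vL - x)\cap K$, and the collection of such cylinders is a $\pi$-system generating the Borel $\sigma$-algebra of $\XX^{}_{0} (\vL)$. Thus $\mu^{}_{0}$ is specified on a generating $\pi$-system and hence uniquely, once a generating set of clusters is known. The passage from $\mu^{}_{0}$ on $\XX^{}_{0}(\vL)$ to $\mu$ on $\XX(\vL)$ is standard (suspension, or equivalently the transversal construction described in \cite{TAO,CFS,EW}), and yields the full measure-theoretic dynamical system.

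The main technical point, and the only non-bookkeeping step, is the Bragg intensity identity $I(0) = \bigl(\eta^{}_{K,P}(0)\bigr)^{2}$; but this is precisely \cite[Cor.~9.1]{TAO} applied to the translation bounded Dirac comb of the locator set, so there is no real obstacle and the proof reduces to assembling the ingredients already developed in the section.
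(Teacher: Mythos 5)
Your proposal is correct and follows essentially the same route as the paper, which proves this proposition through the discussion immediately preceding it: extracting the absolute cluster frequency $\eta^{}_{K,P}(0)$ as the square root of the Bragg intensity $I(0)$ of the derived factor's diffraction, and then reconstructing $\mu^{}_{0}$ on the transversal from the cluster frequencies and $\mu$ from $\mu^{}_{0}$ by the standard suspension argument. Your explicit mention of the $\pi$-system structure of the cylinder sets is a slightly more careful phrasing of the measure-determination step, but the substance is identical.
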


Let us pause to comment on the term `generating' in the above
formulation. It is clear that the set of all clusters suffices, but
that is more than one really needs. A collection of clusters (or
patches, if one restricts to closed balls as compact sets) is called
an \emph{atlas} if it defines the hull $\XX (\vL)$ via the rule that
the latter contains all Delone sets which comply with the atlas (in
the sense that no patch of an element of $\XX (\vL)$ is in violation
of the atlas; see \cite{TAO} for more on this notion). Under certain
circumstances, such an atlas can be finite, in which case $\vL$ is
said to possess \emph{local rules}. The vertex set of the classic
Penrose tiling in the Euclidean plane is a famous aperiodic example of
this situation.\smallskip

To expand on the connection between the hull $\XX (\vL)$ and its
factors, we need a refinement of our arguments in
Section~\ref{sec:symbolic}, as the characteristic function of a
cluster is not continuous on $\XX (\vL)$, wherefore it does not lead
to a complete analogue of a sliding block map as used in
Section~\ref{sec:symbolic}, in the sense that the continuous factor
map from $\XX$ to a derived factor is not a `sliding cluster map'
built from a continuous function on the hull that is defined
locally. Moreover, there is no immediate connection between the
spectral measures of $(\XX,\RR^{d},\mu)$ and the diffraction measures
of derived factors. To establish a connection, we need some
`smoothing' or `regularising' operation, as we will now describe;
compare \cite{Hof,LM} for related ideas.

Let $K\subset \RR^{d}$ be compact, $P$ a $K$-cluster of $\vL$, and
choose a (real-valued) function $\varphi \in C_{\mathsf{c}} (\RR^{d})$
with $\mathrm{supp} (\varphi) \subset B_{r_{\mathsf{p}}} (0)$, where
$r_{\mathsf{p}}$ is the \emph{packing radius} of $\vL$,
\[
    r_{\mathsf{p}} \, = \, \sup \{ r > 0 \mid
    B_{r} (x) \cap B_{r} (y) = \varnothing\,
    \text{ for all distinct } x,y \in \vL \}\ts .
\]
For instance, $\varphi^{}_{\varepsilon} (t) = 1 - \frac{\lvert t
  \rvert}{\varepsilon}$ for $t\in B_{\varepsilon} (0)$ and
$\varphi^{}_{\varepsilon} (t) = 0$ otherwise is a possible choice, with
$\varepsilon < r_{\mathsf{p}}$.  Now, define a function
$\chi^{(\varphi)}_{K,P} \! : \, \XX (\vL) \longrightarrow \RR$ by
\begin{equation}\label{eq:chi-def}
      \chi^{(\varphi)}_{K,P} (\vL') \, = \, \begin{cases}
        \varphi(-t) , & \text{if } (\vL'-t)\cap K = P
            \text{ for some } t \in B_{\varepsilon} (0) , \\
         0 , & \text{otherwise}.  \end{cases}
\end{equation}
Due to the condition on the support of $\varphi$, there is at most one
possible translation $t \in B_{\varepsilon} (0)$ for the occurrence of
$P$, wherefore $\chi^{(\varphi)}_{K,P}$ is indeed well-defined.
Moreover, it is a continuous function on $\XX (\vL)$ by
construction. Note that $\chi^{(\varphi)}_{K,P} (\vL')$ can be
rewritten as
\[
   \chi^{(\varphi)}_{K,P} (\vL') \; =
   \sum_{x\in T^{}_{K,P} (\vL')} \varphi (-x) \; = \,
   \bigl( \varphi * \delta^{}_{T^{}_{K,P} (\vL')} \bigr) (0) \ts .
\]
This, in turn, can be used to define $\chi^{(\varphi)}_{K,P} \! : \,
\XX (\vL) \longrightarrow \CC$ for an \emph{arbitrary} $\varphi \in
C_{\mathsf{c}} (\RR^{d})$. The relevance of this class of functions
emerges from the following completeness result \cite{Martin}.

\begin{prop}\label{prop:chi-complete}
  The linear span of all functions of the form\/
  $\chi^{(\varphi)}_{K,P}$, with\/ $K\subset \RR^{d}$ compact, $P$ a\/
  $K\nts$-clusters of\/ $\vL$ and\/ $\varphi\in C_{\mathsf{c}}
  (\RR^{d})$, is a subalgebra of\/ $C (\XX)$ which is dense with
  respect to the supremum norm.
\end{prop}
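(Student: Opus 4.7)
The plan is to apply the Stone{\ts}--Weierstrass theorem. Denote the linear span in question by $\cA$. Closure of $\cA$ under complex conjugation is immediate from $\overline{\chi^{(\varphi)}_{K,P}} = \chi^{(\bar\varphi)}_{K,P}$. The three remaining ingredients are that $\cA$ is closed under pointwise multiplication, separates points of $\XX (\vL)$, and is nowhere-vanishing on $\XX (\vL)$.

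For the algebra property, I use the representation $\chi^{(\varphi)}_{K,P}(\vL') = \sum_{x \in T^{}_{K,P}(\vL')} \varphi(-x)$ and expand a product $\chi^{(\varphi_1)}_{K_1,P_1} \cdot \chi^{(\varphi_2)}_{K_2,P_2}$ as a double sum over pairs $(x,y) \in T^{}_{K_1,P_1}(\vL') \times T^{}_{K_2,P_2}(\vL')$. Because both $\varphi_i$ have compact support and $y - x \in \vL - \vL$, only finitely many values $v := y - x$ can contribute, by FLC. For each fixed $v$, the joint condition ``$x \in T^{}_{K_1,P_1}(\vL')$ and $x + v \in T^{}_{K_2,P_2}(\vL')$'' is equivalent to $x \in T^{}_{\tilde K_v, \tilde P_v}(\vL')$, where $\tilde K_v := K_1 \cup (v + K_2)$ and $\tilde P_v := P_1 \cup (v + P_2)$ (understood as void whenever the two prescriptions are incompatible on $K_1 \cap (v + K_2)$). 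Re-indexing the double sum yields a finite linear combination of terms $\chi^{(\psi_v)}_{\tilde K_v, \tilde P_v}$ with $\psi_v (t) := \varphi_1 (t) \ts \varphi_2 (t - v) \in C_{\mathsf{c}} (\RR^{d})$, all of which sit in $\cA$.

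Both remaining properties follow from the observation that, for fixed compact $K$, the sum $\sum_P \chi^{(\varphi)}_{K,P}$ over the finite set of $K$-clusters of $\vL$ lies in $\cA$ and equals $\vL' \mapsto \sum_{x \in \vL'} \varphi(-x)$, which is the Dirac comb $\delta^{}_{\vL'}$ tested against $\widetilde{\varphi}$ with $\widetilde{\varphi}(x) = \varphi(-x)$. Since distinct elements of $\XX (\vL)$ carry distinct Dirac combs, the Riesz{\ts}--Markov theorem provides some $\varphi \in C_{\mathsf{c}} (\RR^{d})$ that separates any two chosen points. For nowhere-vanishing, choose $K = \overline{B_{R}(0)}$ with $R$ larger than the covering radius of $\vL$, together with $\varphi \geq 0$ that is strictly positive on $B_R(0)$; then $\vL' \cap B_R(0) \neq \varnothing$ for every $\vL' \in \XX (\vL)$, so the associated summed function is strictly positive throughout $\XX (\vL)$. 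Stone{\ts}--Weierstrass then delivers density of $\cA$ in $C (\XX (\vL))$.

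The main obstacle is the multiplication step, where one must encode a pair-occurrence condition as a single cluster condition. FLC is essential both to keep the sum over $v$ finite and to guarantee that each amalgamated $\tilde P_v$ is actually a cluster of $\vL$, rather than an a priori unrelated finite set, so that the resulting combination is indeed a legitimate element of $\cA$.
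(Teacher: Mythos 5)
Your proof is correct, but it is organised quite differently from the paper's: the paper disposes of the proposition in three lines by citing Schlottmann's Proposition~2.5 in \cite{Martin}, which already contains the Stone--Weierstrass argument, whereas you reconstruct that argument in full. The substance you add is the multiplication step: the amalgamation of the pair condition ``$x$ locates $P_1$ and $x+v$ locates $P_2$'' into the single condition $x\in T^{}_{\tilde K_v,\tilde P_v}(\vL')$ with $\tilde K_v=K_1\cup(v+K_2)$ and $\tilde P_v=P_1\cup(v+P_2)$, the finiteness of the relevant shifts $v\in(\vL-\vL)\cap\bigl(\supp\varphi_1-\supp\varphi_2\bigr)$ by FLC, and the re-indexing with $\psi_v(t)=\varphi_1(t)\ts\varphi_2(t-v)$. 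This is exactly right, and you correctly handle the two degenerate possibilities (incompatible prescriptions on $K_1\cap(v+K_2)$, and amalgamated patterns that never occur in any element of the hull), either of which makes the corresponding term vanish identically; note also that $\tilde K_v$ is again a compact neighbourhood of $0$ and $0\in P_1\subset\tilde P_v$, so the amalgam is a legitimate $K$-cluster in the paper's sense whenever it occurs. The separation and non-vanishing arguments via $\sum_P\chi^{(\varphi)}_{K,P}=N_\varphi$ (the telescoping over the finitely many $K$-clusters) are likewise the standard ones. What the paper's route buys is brevity and the reassurance that the algebra property is already implicit in the cited proof; what yours buys is a self-contained verification, and in particular an explicit display of where the FLC and Delone hypotheses enter. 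The only point glossed over on both sides is the continuity of $\chi^{(\varphi)}_{K,P}$ for general $\varphi\in C_{\mathsf{c}}(\RR^{d})$, which the paper has already asserted before the proposition and which you may take as given.
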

\begin{proof}
  For $\vL' \in \XX$, we set $\chi^{(\varphi)}_{K,P} (\vL') = \sum_{ x
    \in T^{}_{K,P} (\vL') } \varphi (- x)$, as motivated above.
  The statement about the denseness is then an immediate consequence
  of \cite[Prop.~2.5]{Martin}. Note that there is no need to deal with
  the empty set in our situation, as our original set $\vL$ is
  Delone.  The proof of Proposition 2.5 in \cite{Martin} (which uses
  the Stone--Weierstrass theorem) also shows that the linear span is
  an algebra, which completes the proof.
\end{proof}

The induced mapping $\alpha^{(\varphi)}_{K,P} \! : \, \RR^{d} \times
\XX (\vL) \longrightarrow \RR$ defined by
\[
   (t,X) \, \longmapsto \, \alpha^{(\varphi)}_{K,P} (t,X)
      = \chi^{(\varphi)}_{K,P} (X\! -t)
\]
is continuous. One can check that
\[
     \alpha^{(\varphi)}_{K,P} (t,X) \, = \, \bigl( \varphi *
     \delta^{}_{T_{K,P} (X)}\bigr) (t) \; =
     \sum_{x\in T^{}_{K,P} (X)} \varphi(t-x) \ts .
\]
The function $\varphi$ acts as a `regularisation', and gives rise to a
`smoothed sliding cluster map' on $\XX (\vL)$ via $X \mapsto \varphi *
\delta^{}_{T_{K,P} (X)}$, the latter now interpreted as a regular,
translation bounded measure. This mapping is continuous and commutes
with the translation action of $\RR^{d}$, so that we obtain a factor
system $(\YY, \RR^{d}, \mu^{}_{\YY})$ that is again uniquely ergodic.
Note that the elements of $\YY$, which approximate derived factors
without being derived themselves, may both be considered as
(absolutely continuous) translation bounded measures and as continuous
functions on $\RR^{d}$. The latter point of view allows us to take
(pointwise) products, which will become useful shortly.

Consider the regular measure $\omega^{}_{\varphi} = \varphi *
\delta^{}_{T_{K,P} (\vL)}$ as representative, and observe the relation
\begin{equation}\label{eq:reg-auto}
     \gamma^{}_{\omega^{}_{\varphi}} \, = \, (\varphi *
      \widetilde{\varphi} ) * (\omega \circledast \widetilde{\omega})
      \, = \, (\varphi * \widetilde{\varphi} )
        * \gamma^{}_{\omega}
\end{equation}
with the measure $\omega = \delta^{}_{T_{K,P} (\vL)}$ from above.
Note that $\gamma^{}_{\omega_{\varphi}}$ is absolutely continuous as a
measure (relative to Lebesgue measure), with a Radon--Nikodym density
that is continuous as a function on $\RR^{d}$.  Moreover,
$\gamma^{}_{\omega^{}_{\varphi}}$ clearly is a (Fourier) transformable
measure, in the sense that the Fourier transform exists and is
again a measure; compare \cite{BF,TAO} for background. Here,
one obtains
\begin{equation}\label{eq:reg-diffract}
     \widehat{\gamma^{}_{\omega^{}_{\varphi}}} \, = \,
     \lvert \widehat{\varphi} \ts \rvert^{2} \,
     \widehat{\gamma^{}_{\omega}}
\end{equation}
by an application of the convolution theorem \cite{BF}.

When we use the tent-shaped function $\varphi =
\varphi^{}_{\varepsilon}$ from above,
$\widehat{\varphi^{}_{\varepsilon}} (0) = (2 \, \pi^{n/2} \,
\varepsilon^{n}) /\Gamma (n/2)$, where $\Gamma$ denotes the gamma
function, is the volume of the cone defined by the graph of the
function $\varphi^{}_{\varepsilon}$ over $\RR^{d}$, so that the value
of $\widehat{\gamma^{}_{\omega}} (\{0\})$, and thus the density of the
set $T^{}_{K,P} (\vL)$, can be calculated from $
\widehat{\gamma^{}_{\omega^{}_{\varphi}}} (\{0\})$.

Recall $\omega^{}_{\varphi} = \varphi * \delta^{}_{T^{}_{K,P} (\vL)}$
and observe that
\begin{equation}\label{eq:Dworkin}
\begin{split}
   \gamma^{}_{\omega^{}_{\varphi}} (t) \, & = \,
   \lim_{r\to\infty} \frac{1}{\mathrm{vol} (B_{r} (0))}
   \int_{B_{r} (0)} \overline{ \omega^{}_{\varphi} (s)}
   \, \omega^{}_{\varphi} (s+t) \dd s \,
    = \int_{\YY} \overline{Y (0)}\, Y(t) \dd \mu^{}_{\YY} (Y) \\[2mm]
   & = \int_{\XX} \overline{\chi^{(\varphi)}_{K,P} (X)} \,
       U^{}_{t} \chi^{(\varphi)}_{K,P} (X) \dd \mu (X) \,
    = \, \langle \chi^{(\varphi)}_{K,P} \ts | \ts
          U^{}_{t}\ts \chi^{(\varphi)}_{K,P} \rangle ,
\end{split}
\end{equation}
which essentially is an application of Dworkin's argument
\cite{Dworkin,Martin,DM} to this situation. The new twist (or
interpretation) is that it appears by linking the original system with
a factor.  Note that, under the second integral, the element $Y\in
\YY$ is interpreted as a continuous function on $\RR^{d}$, so that its
evaluation at a point is well-defined, as mentioned earlier.

Since the (continuous) function  $\gamma^{}_{\omega^{}_{\varphi}} (t)$
is positive definite, Bochner's theorem links it to
a unique positive measure on the dual group via Fourier transform.
In our case, this gives
\begin{equation}\label{eq:gen-spec}
      \gamma^{}_{\omega^{}_{\varphi}} (t) \, =
     \int_{\RR^{d}} e^{2 \pi \ii t x} \dd
     \widehat{\gamma^{}_{\omega^{}_{\varphi}}} (x)
     \, = \int_{\RR^{d}} e^{2 \pi \ii t x} \,
     \lvert \widehat{\varphi} (x) \rvert^{2}
     \dd \widehat{\gamma^{}_{\omega}} (x) \ts ,
\end{equation}
which is the desired connection between the spectral measure of
$\chi^{(\varphi)}_{K,P}$ and the diffraction measures of
$\omega^{}_{\varphi}$ and $\omega$, via a comparison with
Eq.~\eqref{eq:Dworkin}.
\smallskip

As we already saw, the connection between the spectral measure of a
function and the diffraction of a factor is not restricted to
functions $\varphi$ with small support. The latter were chosen above
to establish the connection with the locator sets of clusters and to
highlight the relation to our treatment of the symbolic case in
Section~\ref{sec:symbolic}.  Independently, for any given $\XX = \XX
(\vL)$ with an FLC point set $\vL$ and for any $\varphi \in C_{\mathsf{c}}
(\RR^{d})$, one may directly define the mapping $\chi^{}_{\varphi} \!
: \, \XX (\vL) \longrightarrow \CC$ by $X \mapsto \chi^{}_{\varphi}
(X) = \bigl( \varphi * \delta^{}_{\! X} \bigr) (0)$. Our previous
reasoning around Eqs.~\eqref{eq:Dworkin} and \eqref{eq:gen-spec} can
now be repeated, which leads to the following result.

\begin{prop}\label{prop:gen-spec}
  Let\/ $\vL \subset \RR^{d}$ be an FLC point set such that its
  hull\/ $\XX = \XX (\vL)$ defines a uniquely ergodic dynamical
  system\/ $(\XX, \RR^{d})$ under the action of\/ $\RR^{d}$. For\/
  $\varphi \in C_{\mathsf{c}} (\RR^{d})$, consider the continuous
  function\/ $g^{}_{\varphi} \! : \, \XX \longrightarrow \CC$ defined
  by $X \mapsto g^{}_{\varphi} (X) := \varphi * \widetilde{\varphi} *
  \gamma^{}_{\nts X}$, where\/ $\gamma^{}_{\nts X}$ is the
  autocorrelation of\/ $\delta^{}_{\! X}$. Then, one has
\[
   g^{}_{\varphi} (t) \, = \, \langle \chi^{}_{\varphi} \ts | \ts
   U^{}_{t} \chi^{}_{\varphi} \rangle \, =
   \int_{\RR^{d}} e^{2 \pi \ii ts} \dd \sigma (s)
\]
with the spectral measure\/ $\sigma = \lvert \widehat{\varphi}\ts
\rvert^{2} \, \widehat{\gamma^{}_{\nts X}}$.  \qed
\end{prop}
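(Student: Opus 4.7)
The plan is to repeat the Dworkin-style derivation that already appeared around Eqs.~\eqref{eq:Dworkin} and \eqref{eq:gen-spec}, but now for an arbitrary $\varphi\in C_{\mathsf{c}}(\RR^{d})$ and the function $\chi^{}_{\varphi}$ on the full hull, without restricting to locator sets of clusters. The three ingredients will be: (i) identify $g^{}_{\varphi}$, viewed as a function of the shift parameter $t$, with the autocorrelation of the smoothed measure $\omega^{}_{\varphi}:=\varphi*\delta^{}_{\! X}$; (ii) recognise the same quantity as $\langle\chi^{}_{\varphi}\ts|\ts U^{}_{t}\chi^{}_{\varphi}\rangle$ via the ergodic theorem; and (iii) pass to Fourier space to read off the spectral measure.

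First, I would invoke unique ergodicity to conclude that $\gamma^{}_{\nts X}=\delta^{}_{\! X}\circledast\widetilde{\delta^{}_{\! X}}$ exists for every $X\in\XX$ and is independent of $X$, so that $g^{}_{\varphi}$ is well-defined without reference to a representative. The argument underlying Eq.~\eqref{eq:reg-auto} then gives $\gamma^{}_{\omega^{}_{\varphi}}=(\varphi*\widetilde{\varphi})*\gamma^{}_{\nts X}=g^{}_{\varphi}$ as a continuous function of $t$. Continuity of $\chi^{}_{\varphi}(X)=(\varphi*\delta^{}_{\! X})(0)=\sum_{x\in X}\varphi(-x)$ on $\XX$ follows from $\varphi\in C_{\mathsf{c}}(\RR^{d})$ together with the FLC property, since the sum is finite and locally stable in the local topology. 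Next, I would reproduce the chain of identities in Eq.~\eqref{eq:Dworkin}: the defining orbital average of $\gamma^{}_{\omega^{}_{\varphi}}(t)$ equals, by Oxtoby's theorem applied to the continuous function $X\mapsto\overline{\chi^{}_{\varphi}(X)}\,(U^{}_{t}\chi^{}_{\varphi})(X)$ on $\XX$, the space average $\int_{\XX}\overline{\chi^{}_{\varphi}}\,U^{}_{t}\chi^{}_{\varphi}\dd\mu=\langle\chi^{}_{\varphi}\,|\,U^{}_{t}\chi^{}_{\varphi}\rangle$, which yields the first equality. For the second equality, the continuous positive definite function $g^{}_{\varphi}$ is by Bochner's theorem the inverse Fourier transform of a unique positive measure on $\widehat{\RR^{d}}\cong\RR^{d}$; by the convolution theorem (compare Eq.~\eqref{eq:reg-diffract}) this measure is $|\widehat{\varphi}|^{2}\,\widehat{\gamma^{}_{\nts X}}$, and that is the asserted spectral measure $\sigma$.

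The step that requires a little care is the ergodic-theorem identification: one must check that $X\mapsto\overline{\chi^{}_{\varphi}(X)}\,(U^{}_{t}\chi^{}_{\varphi})(X)$ is continuous on $\XX$ (immediate from continuity of $\chi^{}_{\varphi}$ together with continuity of the translation action in the local topology), and that Oxtoby's theorem delivers uniform convergence of the Birkhoff averages to the space average, so that the two expressions for $\gamma^{}_{\omega^{}_{\varphi}}(t)$ can legitimately be compared pointwise in $t$ rather than only $\mu$-almost everywhere. Once this is in place, the remainder is standard bookkeeping with transformable positive definite measures, the Fourier convolution theorem, and Bochner's theorem, and no further technical input is needed.
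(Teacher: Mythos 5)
Your proposal is correct and follows essentially the same route as the paper, which simply states that the reasoning around Eqs.~\eqref{eq:Dworkin} and \eqref{eq:gen-spec} can be repeated for $\chi^{}_{\varphi}$ with arbitrary $\varphi\in C_{\mathsf{c}}(\RR^{d})$. Your filling-in of the details (unique ergodicity for the $X$-independence of $\gamma^{}_{\nts X}$, Oxtoby for the uniform pointwise identification of orbital and space averages, and Bochner plus the convolution theorem for the spectral measure) is exactly the intended argument.
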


When $\mathrm{supp} (\varphi) \subset B_{r_{\mathsf{p}}} (0)$, this
result is a special case of our previous situation, with $P$ the
(trivial) singleton cluster and $K= \overline{B_{\varepsilon} (0)}$
for some $\varepsilon < r_{\mathsf{p}}$.  The findings of this section
can now be summarised as follows.

\begin{theorem}\label{thm:Delone}
  Let\/ $\vL$ be an FLC point set with hull\/ $\XX = \XX (\vL)$
  such that\/ $(\XX, \RR^{d})$ is a uniquely ergodic dynamical system,
  with invariant measure\/ $\mu$. Let\/ $K\subset \RR^{d}$ be compact
  and\/ $P$ a $K\nts$-cluster of\/ $\vL$. Then, the following
  properties hold.\smallskip
\begin{enumerate}
 \item The absolute frequency of\/ $P$ in\/ $\vL$ is\/
   $\eta^{}_{K,P} (0)$, where\/ $\eta^{}_{K,P}$ is the autocorrelation
   coefficient from Eq.~\eqref{eq:eta-KP}. Moreover, when\/
   $\gamma^{}_{\omega}$ is the autocorrelation measure of the
   translation bounded measure\/
   $\omega = \delta^{}_{T^{}_{K,P} (\vL)}$, one has\/
   $\widehat{\gamma^{}_{\omega}} \bigl( \{ 0 \} \bigr)=
   \bigl( \eta^{}_{K,P} (0)\bigr)^{2}$.\smallskip
 \item If\/ $\varphi\in C_{\mathsf{c}} (\RR^{d})$, the regularisation\/
   $\omega^{}_{\varphi} = \varphi * \omega = \varphi *
   \delta^{}_{T^{}_{K,P} (\vL)}$ possesses the autocorrelation measure\/
   $\gamma^{}_{\omega^{}_{\varphi}} = \varphi * \widetilde{\varphi} *
   \gamma^{}_{\omega}$ and the diffraction measure\/
   $\widehat{\gamma^{}_{\omega^{}_{\varphi}}} = \lvert \widehat{\varphi}
   \rvert^{2}\, \widehat{\gamma^{}_{\omega}}$. The latter is the
   spectral measure of the continuous function\/ $\chi^{(\varphi)}_{K,P}
   \! : \, \XX \longrightarrow \CC$ defined by\/ $X \mapsto
   \chi^{(\varphi)}_{K,P} (X) = \sum_{x\in T^{}_{K,P} (X)} \varphi (-x)$,
   and one has\/ $\widehat{\gamma^{}_{\omega^{}_{\varphi}}} \bigl( \{ 0 \}
   \bigr) = \lvert \widehat{\varphi} (0) \rvert^{2} \cdot \bigl(
   \eta^{}_{K,P} (0) \bigr)^{2}$.\smallskip
 \item Every spectral measure of\/ $(\XX, \RR^{d}, \mu)$ can be
   approximated arbitrarily well by a finite linear combination of
   diffraction measures of factors that are obtained by smoothed
   sliding cluster maps based on functions of type\/
   $\chi^{(\varphi)}_{K,P}$. In this sense, the diffraction spectra of
   such factors explore the entire dynamical spectrum of\/ $(\XX,
   \RR^{d}, \mu)$.
\end{enumerate}
\end{theorem}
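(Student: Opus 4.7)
The proof breaks naturally along the three claims, which I would attack in increasing order of work. For part~(1), the plan is simply to evaluate Eq.~\eqref{eq:eta-KP} at $z=0$: the resulting limit is the density of $T^{}_{K,P}(\vL)$, and by the definition of a locator set together with unique ergodicity this density coincides with the absolute frequency of the cluster $P$ in $\vL$. The identity $\widehat{\gamma^{}_{\omega}}\bigl(\{0\}\bigr) = \bigl(\eta^{}_{K,P}(0)\bigr)^{2}$ is then the content of \cite[Cor.~9.1]{TAO}, stating that the Bragg intensity at the origin of a uniformly discrete Dirac comb equals the square of its density. For part~(2), everything is already assembled in the discussion preceding the theorem: the autocorrelation identity is Eq.~\eqref{eq:reg-auto} (a consequence of associativity of the Eberlein convolution and $\widetilde{\varphi * \omega} = \widetilde{\varphi} * \widetilde{\omega}$), its Fourier transform is Eq.~\eqref{eq:reg-diffract} via the convolution theorem, and the identification of $\widehat{\gamma^{}_{\omega^{}_{\varphi}}}$ with the spectral measure of $\chi^{(\varphi)}_{K,P}$ follows by comparing the Dworkin-type computation in Eq.~\eqref{eq:Dworkin} with the Bochner representation in Eq.~\eqref{eq:gen-spec}. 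The value at $\{0\}$ is then read off from Eq.~\eqref{eq:reg-diffract} combined with part~(1).

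The main work lies in part~(3). By Proposition~\ref{prop:chi-complete}, the linear span of the functions $\chi^{(\varphi)}_{K,P}$ is dense in $C(\XX)$ in the supremum norm and hence, since $\mu$ is a finite measure, also dense in $L^{2}(\XX,\mu)$. For a given $f \in L^{2}(\XX,\mu)$ I would pick a sequence $g_{n}$ in this span with $\lVert f - g_{n} \rVert^{}_{2} \to 0$ and invoke the continuity estimate $\lVert \sigma^{}_{\! f} - \sigma^{}_{g_{n}} \rVert^{}_{\mathrm{TV}} \le \bigl(\lVert f \rVert^{}_{2} + \lVert g_{n} \rVert^{}_{2}\bigr) \lVert f - g_{n} \rVert^{}_{2}$, which follows from polarisation together with the Cauchy--Schwarz bound $\lvert \sigma^{}_{\! h,k}(A) \rvert \le \sqrt{\sigma^{}_{h}(A)\, \sigma^{}_{k}(A)}$ for cross spectral measures. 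This yields $\sigma^{}_{g_{n}} \to \sigma^{}_{\! f}$ in total variation, and in particular vaguely.

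It then remains to present each $\sigma^{}_{g_{n}}$ as a finite linear combination of the diffractions produced by part~(2). Writing $\chi_{i}$ for $\chi^{(\varphi_{i})}_{K_{i},P_{i}}$ and expanding $g_{n} = \sum_{i} c_{i}\, \chi_{i}$, sesquilinearity yields $\sigma^{}_{g_{n}} = \sum_{i,j} \overline{c_{i}}\, c_{j}\, \sigma^{}_{\chi_{i},\chi_{j}}$, and the polarisation identity writes each cross spectral measure as a complex linear combination of the ordinary spectral measures $\sigma^{}_{\chi_{i} \pm \chi_{j}}$ and $\sigma^{}_{\chi_{i} \pm \ii \chi_{j}}$ together with $\sigma^{}_{\chi_{i}}$ and $\sigma^{}_{\chi_{j}}$. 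The Dworkin computation used in part~(2) goes through verbatim for any continuous function on $\XX$, so each spectral measure appearing in this expansion equals the diffraction of the smoothed factor defined by the corresponding element of the span. The subtlety I anticipate --- and which I would flag as an interpretive issue rather than a technical obstacle --- is that a sum $\chi^{(\varphi_{i})}_{K_{i},P_{i}} + \chi^{(\varphi_{j})}_{K_{j},P_{j}}$ is not itself of the form $\chi^{(\psi)}_{K,P}$, so the phrase ``factors obtained by smoothed sliding cluster maps based on functions of type $\chi^{(\varphi)}_{K,P}$'' must be read as covering factors built from arbitrary linear combinations of such functions; with this reading in place, the argument closes.
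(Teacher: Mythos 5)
Your proposal is correct and follows essentially the same route as the paper: parts (1) and (2) are assembled from Eqs.~\eqref{eq:eta-KP}, \eqref{eq:reg-auto}, \eqref{eq:reg-diffract}, \eqref{eq:Dworkin} and \eqref{eq:gen-spec} exactly as in the printed proof, and part (3) rests on the density statement of Proposition~\ref{prop:chi-complete} combined with the continuity of $f \mapsto \sigma^{}_{\! f}$. Your treatment of part (3) is in fact more explicit than the paper's one-line argument, since you spell out the total-variation estimate and the polarisation of the cross spectral measures $\sigma^{}_{\chi_i,\chi_j}$ --- a step the paper leaves implicit --- and the interpretive point you flag (that sums of functions of type $\chi^{(\varphi)}_{K,P}$ are not again of that form, so ``factors based on such functions'' must be read as including their linear combinations) is precisely the reading the statement requires.
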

\begin{proof}
  The first claim derives from Eq.~\eqref{eq:eta-KP} and the arguments
  given there, while the connection between
  $\widehat{\gamma^{}_{\omega}} \bigl( \{ 0 \} \bigr)$ and
  $\eta^{}_{K,P} (0)$ is standard; compare \cite[Cor.~9.1]{TAO}.

  The first part of the second claim is a consequence of
  Eq.~\eqref{eq:reg-auto}, which follows from an elementary
  calculation, and Eq.~\eqref{eq:reg-diffract}, which results from an
  application of the convolution theorem to this situation; compare
  \cite[Thm.~8.5]{TAO}. The second part is the combination of
  Eqs.~\eqref{eq:Dworkin} and \eqref{eq:gen-spec}; see also
  Proposition~\ref{prop:gen-spec}, applied to $\vL' = T^{}_{K,P}
  (\vL)$.

  The third claim follows from Proposition~\ref{prop:chi-complete}
  and the observation that the closeness of two continuous functions
  on $\XX$  in the norm topology implies that the corresponding
  spectral measures are close in the vague topology.
\end{proof}

Let us finish this section by formulating a variant of
Theorem~\ref{thm:Delone}.  Let $\vL \subset \RR^{d}$ be an FLC Delone
set, and $(\XX,\RR^{d})$ the associated topological dynamical system.
Recall that any $K$-cluster $P$ of $\vL$ comes with a factor
\[
   \YY^{}_{K,P} \, := \, \{ T^{}_{K,P} (\vL') \mid \vL' \in \XX (\vL) \}\ts ,
\]
which is derived from $\XX$ via $(K,P)$.  If the original system is
uniquely ergodic, then so are all of its factor systems, and all
derived factors in particular. Our previous calculations then show
that the autocorrelation measure of the factor $\XX_{K,P}$ is given as
\begin{equation}\label{eq:gam-om}
   \gamma^{}_{K,P} \, = \, \gamma^{}_{\omega} \, = \,
    \omega \circledast
   \widetilde{\omega} \, = \sum_{z\in \vL - \vL}
   \eta^{}_{K,P} (z) \ts \delta_{z} \ts ,
\end{equation}
with the coefficients $\eta^{}_{K,P}$ from Eq.~\eqref{eq:eta-KP}. We
can now turn Theorem~\ref{thm:Delone} into the following analogue of
Theorem~\ref{thm:symbolic} from Section~\ref{sec:symbolic}.

\begin{coro}
  Let\/ $\vL\subset \RR^{d}$ be an FLC Delone set, and assume that the
  associated dynamical system\/ $(\XX,\RR^{d})$ is uniquely ergodic,
  with invariant measure\/ $\mu$. Then, the following properties hold.
\begin{enumerate}
\item Whenever\/ $\widehat{\gamma}$ is the diffraction measure of a
  derived factor of\/ $(\XX,\RR^{d},\mu)$, the measure $\lvert
  \widehat{\varphi}\rvert^{2} \, \widehat{\gamma}$, with\/ $\varphi
  \in C_{\mathsf{c}} (\RR^d)$ arbitrary, is a spectral measure of\/ $(\XX,
  \RR^d,\mu)$.\smallskip
\item There is a dense set\/ $\cD \subset C(\XX)$, hence also dense
  in\/ $L^{2} (\XX,\mu)$, such that the spectral measure\/
  $\sigma^{}_{\! g}$ of any\/ $g\in \cD$ has the form\/ $\lvert
  \widehat{\varphi}\rvert^{2} \, \widehat{\gamma}$, for some\/
  $\varphi \in C_{\mathsf{c}} (\RR^d)$ and with\/ $\widehat{\gamma}$
  being the diffraction measure of a derived factor of\/
  $(\XX,\RR^{d},\mu)$.  \qed
\end{enumerate}
\end{coro}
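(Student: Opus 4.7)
The plan is to derive both items of the corollary as essentially immediate consequences of Theorem~\ref{thm:Delone} together with Proposition~\ref{prop:chi-complete}, in close analogy with the derivation of Theorem~\ref{thm:symbolic} in the symbolic setting. The key technical observation is that part~(2) of Theorem~\ref{thm:Delone} already identifies the spectral measure of each smoothed sliding cluster function $\chi^{(\varphi)}_{K,P}$ as $\lvert\widehat{\varphi}\rvert^{2}\,\widehat{\gamma^{}_{K,P}}$, and that by \eqref{eq:gam-om} the autocorrelation $\gamma^{}_{K,P}$ is precisely that of the derived factor $\YY^{}_{K,P}$.

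For~(1), given any derived factor $\YY^{}_{K,P}$ with diffraction $\widehat{\gamma^{}_{K,P}} = \widehat{\gamma^{}_{\omega}}$ for $\omega = \delta^{}_{T^{}_{K,P}(\vL)}$, I would invoke Theorem~\ref{thm:Delone}(2) to conclude that, for every $\varphi \in C_{\mathsf{c}}(\RR^{d})$, the measure $\lvert\widehat{\varphi}\rvert^{2}\,\widehat{\gamma^{}_{K,P}}$ equals the spectral measure of the continuous function $\chi^{(\varphi)}_{K,P}$ on $\XX$. Since any spectral measure of a continuous function on $\XX$ is by definition a spectral measure of the dynamical system $(\XX,\RR^{d},\mu)$, this settles~(1) with no further work.

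For~(2), I would take $\cD$ to consist of the functions $\chi^{(\varphi)}_{K,P}$ as $\varphi$ ranges over $C_{\mathsf{c}}(\RR^{d})$ and $(K,P)$ over compact subsets of $\RR^{d}$ together with $K$-clusters $P$ of $\vL$. Each $g \in \cD$ has spectral measure $\lvert\widehat{\varphi}\rvert^{2}\,\widehat{\gamma^{}_{K,P}}$ by Theorem~\ref{thm:Delone}(2), exactly the form required, with $\widehat{\gamma}=\widehat{\gamma^{}_{K,P}}$ the diffraction of the derived factor $\YY^{}_{K,P}$. Density of $\cD$ in $C(\XX)$, and hence in $L^{2}(\XX,\mu)$, is to be extracted from Proposition~\ref{prop:chi-complete}. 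The main obstacle I expect is the subtle gap between density of the \emph{linear span} of the $\chi^{(\varphi)}_{K,P}$, which is what Proposition~\ref{prop:chi-complete} actually delivers, and density of the set $\cD$ itself, since a finite combination $\sum_{i}c_{i}\chi^{(\varphi_{i})}_{K_{i},P_{i}}$ cannot in general be rewritten as a single $\chi^{(\varphi)}_{K,P}$. Following the symbolic template, where $\AAA(\XX)$ is itself taken as the full dense subspace, I would resolve this by enlarging $\cD$ to the linear span and exploiting the refinement identity $T^{}_{K_{i},P_{i}}(\vL') = \bigsqcup_{Q \cap K_{i}=P_{i}} T^{}_{K,Q}(\vL')$ over a common compact window $K \supset \bigcup_{i} K_{i}$, which rewrites any element of the span in the unified form $\sum_{Q}\chi^{(\psi_{Q})}_{K,Q}$, thereby attaching each contribution to a single derived factor of the kind specified in the statement.
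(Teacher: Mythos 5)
Your treatment of part (1) is exactly the paper's (implicit) argument: Theorem~\ref{thm:Delone}(2) identifies $\lvert\widehat{\varphi}\rvert^{2}\,\widehat{\gamma^{}_{K,P}}$ as the spectral measure of the continuous function $\chi^{(\varphi)}_{K,P}\in C(\XX)\subset L^{2}(\XX,\mu)$, and by Eq.~\eqref{eq:gam-om} the measure $\widehat{\gamma^{}_{K,P}}$ is the diffraction of the derived factor $\YY^{}_{K,P}$. Since the paper supplies no written proof of this corollary, your part (1) is precisely what its \qed stands for, and it is correct.

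For part (2) you have correctly put your finger on the real subtlety --- Proposition~\ref{prop:chi-complete} gives density of the \emph{linear span} of the functions $\chi^{(\varphi)}_{K,P}$, not of the set of these functions --- but your proposed repair does not close the gap. Refining all windows to a common compact $K$ turns $\sum_i c_i\,\chi^{(\varphi_i)}_{K_i,P_i}$ into $\sum_{Q}\chi^{(\psi_Q)}_{K,Q}$, where $Q$ still ranges over \emph{several distinct} $K$-clusters and the weight functions $\psi_Q$ differ from cluster to cluster. Such a sum is not of the form $\chi^{(\varphi)}_{K,P}$ for a single pair $(K,P)$: its spectral measure contains cross terms coming from the correlations between the different locator sets $T^{}_{K,Q}$ and $T^{}_{K,Q'}$, and hence is not $\lvert\widehat{\varphi}\rvert^{2}\,\widehat{\gamma}$ for the diffraction $\widehat{\gamma}$ of any single derived factor. (A function such as $\chi^{(\varphi_1)}_{K,P_1}+\chi^{(\varphi_2)}_{K,P_2}$ genuinely depends on the positions of two distinct cluster types and cannot in general be uniformly approximated by functions that register occurrences of only one.) So enlarging $\cD$ to the span makes the displayed form of $\sigma^{}_{g}$ false for generic $g\in\cD$. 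The defensible content of part (2) is what Theorem~\ref{thm:Delone}(3) and, in precise form, Theorem~\ref{main-abstract} assert: the collection $\{\chi^{(\varphi)}_{K,P}\}$ is \emph{total} (its linear span is dense), each of its members has spectral measure of the stated form, and the resulting family of measures is a complete spectral invariant. If you want a literal proof of part (2), keep $\cD$ as the bare set of the $\chi^{(\varphi)}_{K,P}$ and interpret ``dense'' in this totality sense; note that the paper itself glosses over the same point when it calls $\cD$ a ``subspace'' in the remark following the corollary.
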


Note that, in contract to Theorem~\ref{thm:symbolic}, a derived factor
in this setting in general does not emerge from a factor map of
sliding block or cluster type. Let us also emphasise that, similarly
to Eq.~\eqref{eq:A-def}, the subspace $\cD \subset C(\XX)$ is again
completely explicit, as formulated in
Proposition~\ref{prop:chi-complete}.

\begin{remark}
  In view of these findings, it is suggestive to take a closer look at
  systems with finitely many (non-periodic) factors, up to topological
  conjugacy or up to metric isomorphism. Examples of the former type
  include linearly repetitive FLC systems \cite{Dur,CDP}, while
  Bernoulli shifts provide the paradigm of the latter \cite{Orn}; see
  also \cite[Ch.~7]{Rudolph} for further connections. It is a
  challenge in this context to understand how the diffraction spectra
  of equivalent systems are related. So far, the study of examples
  (compare also our Appendix) suggests that further progress via the
  diffraction approach is indeed possible for systems with finitely
  many factors (up to equivalence).
\end{remark}

Let us now embark on an abstract reformulation in the more general
setting of locally compact Abelian groups.  For convenience, we will
give a self-contained exposition, so that the generalisation of our
previous notions becomes transparent.

\section{An abstract approach}\label{sec:general}

Our considerations will primarily be set in the framework of
topological dynamical systems. We are dealing with $\sigma$-compact
locally compact topological groups and compact spaces.  All
topological spaces are assumed to be Hausdorff.  The general
approach to diffraction via measure dynamical systems discussed below
is largely taken from \cite{BL}; see \cite{BL-2,LS,Lenz} as well.

When $\cX$ is a $\sigma$-compact locally compact space, we denote the
space of continuous functions on $\cX$ by $C(\cX)$, and the subspace
of continuous functions with compact support by $C_{\mathsf{c}}
(\cX)$. The space $C_{\mathsf{c}} (\cX)$ is equipped with the locally
convex limit topology induced by the canonical embeddings $C_K (\cX)
\hookrightarrow C_{\mathsf{c}} (\cX)$, where $C_K (\cX)$ is the space
of complex continuous functions with support in a given compact set
$K\subset \cX$. Here, each $C_K (\cX)$ is equipped with the topology
induced by the standard supremum norm.

As $\cX$ is a topological space, it carries a natural $\sigma$-algebra,
namely the Borel $\sigma$-algebra generated by all closed subsets of
$X$.  The set $\mathcal{M} (\cX)$ of all complex regular Borel measures
on $G$ can then be identified with the space $C_{\mathsf{c}} (\cX)^\ast$
of complex-valued, continuous linear functionals on
$C_{\mathsf{c}} (\cX)$. This is justified by the Riesz--Markov
representation theorem; compare \cite[Ch.~6.5]{Ped} for details. In
particular, we can write $\int_{\cX} f \dd\mu = \mu(f)$ for $f\in
C_{\mathsf{c}}(\cX)$ and simplify the notation this way. The space
$\mathcal{M} (\cX)$ carries the vague topology, which is the weakest
topology that makes all functionals $\mu\mapsto \mu(\varphi)$ on
$\varphi\in C_{\mathsf{c}} (\cX)$ continuous.  The total variation of a
measure $\mu \in \mathcal{M} (\cX)$ is denoted by $|\mu|$. Note that,
unless $\cX$ is compact, an element $\mu \in \mathcal{M} (\cX)$ need
not be bounded. \smallskip

Let $G$ now be a fixed $\sigma$-compact LCAG. The dual group of $G$ is
denoted by $\widehat{G}$, and the pairing between a character
$\widehat{s} \in \widehat{G}$ and $t \in G$ is written as
$(\widehat{s},t)$.  Whenever $G$ acts on the compact Hausdorff space
$\XX$ by a continuous action
\begin{equation*}
   \alpha \! : \; G\times \XX \; \longrightarrow \; \XX
   \, , \quad (t,\omega) \, \mapsto \, \alpha^{}_{t} (\omega) \ts ,
\end{equation*}
where $G\times \XX$ carries the product topology, the pair $\Oomega$
is called a \emph{topological dynamical system} over $G$. We shall
often write $\alpha^{}_{t}\ts \omega$ for $\alpha^{}_{t} (\omega)$,
and think of this as a translation action.  If $\omega\in\XX$
satisfies $\alpha^{}_{t}\ts\omega = \omega$, the element $t\in G$ is
called a \emph{period} of $\omega$. If all $t\in G$ are periods,
$\omega$ is called $G$-invariant, or $\alpha$-{\em invariant\/} to
refer to the action involved.

The set of all Borel probability measures on $\XX$ is denoted by
$\cPO$, and the subset of $\alpha$-invariant probability measures by
$\cPGO$.  An $\alpha$-invariant probability measure is called {\em
  ergodic\/} if every (measurable) $\alpha$-invariant subset of $\XX$
has either measure zero or measure one.  The ergodic measures are
exactly the extremal points of the convex set $\cPGO$. The dynamical
system $\Oomega$ is called \emph{uniquely ergodic} if $\cPGO$ is a
singleton set (which means that it consists of exactly one element).
As usual, $\Oomega$ is called \emph{minimal} if, for all
$\omega\in\XX$, the $G$-orbit $\{\alpha^{}_t\ts \omega \mid t \in G\}$
is dense in $\XX$.  If $\Oomega$ is both uniquely ergodic and minimal,
it is called \emph{strictly ergodic}. \smallskip

Given a $\mu\in \cPGO$, we can form the Hilbert space $\LO$ of square
integrable measurable functions on $\XX$. This space is equipped
with the inner product
\begin{equation*}
    \langle f \ts | \ts g\rangle \, = \,
    \langle f \ts | \ts g\rangle^{}_{\XX} \, :=
    \int_\XX \overline{f(\omega)}\, g(\omega) \dd \mu (\omega).
\end{equation*}
The action $\alpha$ gives rise to a unitary representation $T =
T^\XX := T^{(\XX,\alpha,\mu)}$ of $G$ on $\LO$ by
\begin{equation*}
  T_t \! : \, \LO \; \longrightarrow \; \LO \, ,
  \quad (T_t f) (\omega) \, := \,
  f(\alpha^{}_{-t}\ts \omega) \ts ,
\end{equation*}
for every $f\in \LO$ and arbitrary $t\in G$.

By Stone's theorem, compare \cite[Sec.~36D]{Loomis}, there exists a
projection-valued measure
\begin{equation*}
  E_T\! : \; \{\mbox{Borel sets of $\widehat{G}$}\} \; \longrightarrow \;
  \{\mbox{projections on $\LO$}\}
\end{equation*}
with
\begin{equation*}
  \langle f \ts | \ts T_t f \rangle \, =
  \int_{\widehat{G}} (\widehat{s},t) \dd
  \langle f \ts | \ts E_T(.) f\rangle (\widehat{s}\ts) \, :=
  \int_{\widehat{G}} (\widehat{s}, t) \dd \sigma^{}_{\! f}
  (\widehat{s}\ts)\ts ,
\end{equation*}
where $\sigma^{}_{\! f} = \sigma_f^\XX:= \sigma_f^{(\XX,\alpha,\mu)}$ is
the (positive) measure on $\widehat{G}$ defined by $\sigma^{}_{\! f} (B) :=
\langle f \ts | \ts E_T (B)f\rangle$.  In fact, by Bochner's theorem
\cite{Rudin}, $\sigma^{}_{\! f}$ is the unique measure on
$\widehat{G}$ with $\langle f \ts | \ts T_t f \rangle =
\int_{\widehat{G}}\, (\widehat{s}, t) \dd \sigma^{}_{\! f}
(\widehat{s}\ts)$ for every $t\in G$. The measure $\sigma^{}_{\! f}$ is
called the \textit{spectral measure} of $f$.

The projection-valued measure $E_T$ contains the entire spectral
information on the dynamical system.  It is desirable to encode this
spectral information in terms of measures on $\widehat{G}$. One way of
doing so is via the family of spectral measures. More generally, we
introduce the following definition.

\begin{definition}\label{spectralinvariant} Let $T =
  T^{(\XX,\alpha,\mu)}$ be the unitary representation associated to
  the invariant probability measure $\mu$ on the dynamical system
  $\Oomega$, and let $E_T$ be the corresponding projection-valued
  measure.  A family $ \{ \sigma_\iota \}$ of measures on $\widehat{G}$
  (with $\iota$ in some index set $J$) is called a \emph{complete
    spectral invariant} when $E_T (A) = 0$ holds for a Borel set
  $A\subset \widehat{G}$ if and only if $\sigma_\iota (A) = 0$ holds
  for all $\iota \in J$.
\end{definition}

Let us now turn to factors. Here, we essentially follow the
presentation given in \cite{BL-2}, to which we refer for further
details and proofs.  Let two topological dynamical systems\/ $\Oomega$
and\/ $\Ttheta$ under the action of $G$ and a mapping\/ $\varPhi \! :
\XX \longrightarrow\YY$ be given.  Then, $\Ttheta$ is called a\/ {\em
  factor} of $\Oomega$, with factor map\/ $\varPhi$, if\/ $\varPhi$ is
a continuous surjection with $\varPhi (\alpha^{}_t (\omega)) =
\beta^{}_t (\varPhi (\omega))$ for all\/ $\omega\in \XX$ and\/ $t\in
G$.

In this situation, $\Ttheta$ inherits many features from $\Oomega$.
For example, $U\subset \YY$ is open if and only if\/ $\varPhi^{-1}
(U)$ is open in\/ $\XX$.  Also, $\varPhi$ induces a mapping $\varPhi_*
\! : \, \cMO \longrightarrow \cMT$, $\rho\mapsto\varPhi_*(\rho)$, via
$\big(\varPhi_*(\rho)\big) (g) := \mu(g\circ\varPhi)$ for all $g\in
C(\YY)$. If $\mu$ is a probability measure on $\XX$, its image
$\nu:=\varPhi_*(\mu)$ is a probability measure on $\YY$.  Moreover, if
$\varPhi$ is a factor map, invariance under the group action is
preserved. In fact, $\varPhi_*$ is a continuous surjection from the
set $\cP^{}_{G} (\XX)$ of invariant measures on $\XX$ onto the set
$\cP^{}_{G} (\YY)$ of invariant measures on $\YY$.  Based on the
results of \cite{DGS}, some important properties can be summarised as
follows; see \cite{BL-2} as well.

\begin{prop} \label{prop:transfer}
  Let\/ $\Ttheta$ be a factor of\/ $\Oomega$, with factor map\/
  $\varPhi \! : \, \XX\longrightarrow \YY$.  If the system\/ $\Oomega$
  is ergodic, uniquely ergodic, minimal, or strictly ergodic, the
  analogous property holds for\/ $\Ttheta$ as well.  \qed
\end{prop}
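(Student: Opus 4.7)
The plan is to handle each of the four properties (ergodicity, unique ergodicity, minimality, strict ergodicity) in turn, leveraging the facts already recorded before the statement: $\varPhi$ is continuous and surjective, it intertwines the actions via $\varPhi\circ\alpha^{}_{t}=\beta^{}_{t}\circ\varPhi$, the pushforward is defined by $\bigl(\varPhi^{}_{*}(\rho)\bigr)(g) = \rho(g\circ\varPhi)$, and $\varPhi^{}_{*}$ is a continuous surjection from $\cP^{}_{G}(\XX)$ onto $\cP^{}_{G}(\YY)$. Setting $\nu := \varPhi^{}_{*}(\mu)$ throughout, the central observation I would repeatedly invoke is that for any Borel set $B\subset\YY$, we have $\nu(B)=\mu(\varPhi^{-1}(B))$, and $\varPhi^{-1}(B)$ is $\alpha$-invariant whenever $B$ is $\beta$-invariant (since $\alpha^{}_{t}\varPhi^{-1}(B)=\varPhi^{-1}(\beta^{}_{t}B)=\varPhi^{-1}(B)$).

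For \emph{unique ergodicity}, the transfer is immediate: since $\varPhi^{}_{*} \! : \cP^{}_{G}(\XX)\to\cP^{}_{G}(\YY)$ is surjective (as noted just above the statement, following \cite{DGS}), a singleton domain forces a singleton codomain, so $\cP^{}_{G}(\YY)=\{\nu\}$. For \emph{ergodicity}, suppose $B\subset\YY$ is Borel and $\beta$-invariant. By the observation above, $\varPhi^{-1}(B)$ is $\alpha$-invariant, so ergodicity of $\mu$ gives $\mu(\varPhi^{-1}(B))\in\{0,1\}$, which is exactly $\nu(B)\in\{0,1\}$; thus $\nu$ is ergodic.

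For \emph{minimality}, I would argue directly on orbits: given $y\in\YY$, surjectivity of $\varPhi$ yields some $x\in\XX$ with $\varPhi(x)=y$, and minimality of $(\XX,\alpha)$ makes the orbit $\{\alpha^{}_{t}x\mid t\in G\}$ dense in $\XX$. Applying the intertwining relation, the image of this orbit under $\varPhi$ is precisely $\{\beta^{}_{t}y\mid t\in G\}$, and a brief verification (using that for any open $U\subset\YY$, continuity makes $\varPhi^{-1}(U)$ open and nonempty, and density of the orbit of $x$ forces it to meet $\varPhi^{-1}(U)$) shows that the image of a dense set under a continuous surjection is dense. Hence the orbit of $y$ is dense in $\YY$, and $(\YY,\beta)$ is minimal. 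Finally, \emph{strict ergodicity} follows by combining the unique ergodicity and minimality arguments.

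This proposition is essentially bookkeeping, so the only substantive step is the brief lemma that a continuous surjection of compact Hausdorff spaces sends dense sets to dense sets, which I expect to dispatch in one line. No step looks like a serious obstacle; the main thing is to keep the roles of $\varPhi$, $\varPhi^{-1}$, and $\varPhi^{}_{*}$ clearly separated so that invariance of sets and invariance of measures are not conflated.
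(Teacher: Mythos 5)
Your proof is correct, and it takes the standard route: the paper itself gives no argument for this proposition, deferring entirely to \cite{DGS} and \cite{BL-2}, and the reasoning you supply (pushforward of invariant sets and measures, surjectivity of $\varPhi^{}_{*}$ for unique ergodicity, image of a dense orbit under a continuous surjection for minimality) is exactly what underlies those citations. No gaps.
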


Now, let $\Ttheta$ be a factor of $\Oomega$ with factor map $\varPhi
\! : \XX\longrightarrow \YY$, and let $\mu \in \cPGO$ be fixed. Denote
the induced measure on $\YY$ by $\nu=\varPhi_\ast (\mu)$. Consider the
mapping
\begin{equation}  \label{i-def}
   i^{\,\varPhi} \! : \; \LT \longrightarrow \LO \, , \quad
   f\mapsto f\circ \varPhi \, ,
\end{equation}
and let $p^{}_\varPhi \! : \LO\longrightarrow \LT$ be the adjoint of
$i^{\,\varPhi}$. Then, the maps $i^{\,\varPhi}$ and $p^{}_\varPhi$ are partial
isometries, and $i^{\,\varPhi}$ is an isometric embedding with
\begin{equation*}
   p^{}_\varPhi \circ i^{\,\varPhi} \; = \; {\rm id}^{}_{\LT}
   \quad \mbox{and} \quad
   i^{\,\varPhi} \! \circ p^{}_\varPhi \; = \; P_{i^{\,\varPhi} (\LT)}\ts ,
\end{equation*}
where ${\rm id}^{}_{\LT}$ is the identity on $\LT$ and $P_{i^{\,\varPhi}
  (\LT)}$ is the orthogonal projection of $\LO$ onto $\cV:=i^{\,\varPhi}
(\LT)$.

Given these maps, we can now summarise the relation between the
spectral theory of $T^\XX$ and that of $T^\YY$ as follows; compare
\cite[Thm.~1]{BL-2}.
\begin{theorem} \label{factor-spectralmeasure} 
  Fix some\/ $\mu\in \cP^{}_{G} (\XX)$ and let\/ $\LO$ and\/ $\LT$ be
  the canonical Hilbert spaces attached to the dynamical systems\/
  $\Oomega$ and\/ $\Ttheta$, with factor map\/ $\varPhi$ and\/
  $\nu=\varPhi_\ast (\mu)$.  Then, the partial isometries\/
  $i_{}^{\ts\varPhi}$ and\/ $p^{}_{\varPhi}$ are compatible with the
  unitary representations\/ $T^\XX$ and\/ $T^\YY$ of\/ $G$ on\/ $\LO$
  and\/ $\LT$, in the sense that
\[
     i^{\,\varPhi}  \circ T_t^\YY \, = \,
     T_t^\XX \circ i^{\,\varPhi} \quad \mbox{and}\quad
     T_t^\YY \circ p^{}_\varPhi  \, = \,
     p^{}_\varPhi \circ T_t^\XX
\]
   hold for all\/ $t\in G$.
   Similarly, the spectral families\/ $E_{T^\YY}$ and\/
   $E_{T^\XX}$ satisfy
\[
  i^{\,\varPhi}\circ E_{T^\YY} (\cdot)  \, = \,
   E_{T^\XX} (\cdot)  \circ  i^{\,\varPhi}
   \quad \mbox{and}\quad
   E_{T^\YY} (\cdot)\circ  p^{}_\varPhi
   \; = \;  p^{}_\varPhi \circ E_{T^\XX} (\cdot) \ts .
\]
  The corresponding spectral measures satisfy\/ $\sigma_g^\YY =
  \sigma_{i^{\,\varPhi} (g)}^\XX$ for every\/ $g\in \LT$.  \qed
\end{theorem}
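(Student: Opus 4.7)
The plan is to derive all assertions from the pointwise intertwining of $T^\YY$ and $T^\XX$, and then propagate it successively to the projection-valued measures and to the scalar spectral measures.

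First I would verify the operator identity $i^{\,\varPhi}\circ T_t^\YY = T_t^\XX \circ i^{\,\varPhi}$ by unwinding the definitions: for $f\in \LT$ and $\omega\in\XX$,
\[
  \bigl( i^{\,\varPhi} T_t^\YY f\bigr)(\omega) \, = \,
  \bigl(T_t^\YY f\bigr)(\varPhi \omega) \, = \,
  f\bigl(\beta^{}_{-t}\ts \varPhi \omega\bigr) \, = \,
  f\bigl(\varPhi(\alpha^{}_{-t}\ts \omega)\bigr) \, = \,
  \bigl(T_t^\XX i^{\,\varPhi} f\bigr)(\omega),
\]
where the decisive step is the factor relation $\varPhi \circ \alpha^{}_{-t} = \beta^{}_{-t}\circ \varPhi$. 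The companion identity $T_t^\YY\circ p^{}_\varPhi = p^{}_\varPhi\circ T_t^\XX$ then follows by taking adjoints, using $(T_t^\bullet)^\ast = T_{-t}^\bullet$ and $(i^{\,\varPhi})^\ast = p^{}_\varPhi$.

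Next I would promote this intertwining to the spectral resolution. The first identity implies that the image $\cV := i^{\,\varPhi}(\LT)$ is invariant under every $T^\XX_t$, so the orthogonal projection $P^{}_\cV = i^{\,\varPhi}\circ p^{}_\varPhi$ commutes with each $T^\XX_t$ and, via functional calculus, with every spectral projection $E_{T^\XX}(B)$. I would then consider the operator-valued set function
\[
  F(B) \, := \, p^{}_\varPhi \ts E_{T^\XX}(B)\ts i^{\,\varPhi}
\]
on Borel sets $B \subset \widehat G$. Using $p^{}_\varPhi\circ i^{\,\varPhi} = \mathrm{id}^{}_{\LT}$ together with the commutation of $P^{}_\cV$ with $E_{T^\XX}(\cdot)$, a short check shows that $F$ is a projection-valued measure on $\LT$ with $F(\widehat G) = \mathrm{id}^{}_{\LT}$, and that
\[
  \langle g\ts|\ts T_t^\YY g\rangle^{}_\YY \, = \,
  \langle i^{\,\varPhi} g\ts|\ts T_t^\XX i^{\,\varPhi} g\rangle^{}_\XX \, =
  \int_{\widehat G}(\widehat s, t)\dd\langle g\ts|\ts F(\cdot)g\rangle(\widehat s)
\]
for all $g\in\LT$ and $t\in G$. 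By the uniqueness part of Stone's theorem, $F = E_{T^\YY}$. Combining this with the commutation of $P^{}_\cV$ and $E_{T^\XX}(B)$ yields $E_{T^\XX}(B)\circ i^{\,\varPhi} = i^{\,\varPhi}\circ E_{T^\YY}(B)$, and the companion identity for $p^{}_\varPhi$ follows by taking adjoints.

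Finally, the spectral measure identity is then immediate: for $g\in\LT$ and any Borel set $B\subset\widehat G$,
\[
  \sigma^\XX_{i^{\,\varPhi} g}(B) \, = \,
  \langle i^{\,\varPhi} g \ts|\ts E_{T^\XX}(B)\ts i^{\,\varPhi} g\rangle^{}_\XX \, = \,
  \langle i^{\,\varPhi} g \ts|\ts i^{\,\varPhi} E_{T^\YY}(B)\ts g\rangle^{}_\XX \, = \,
  \langle g \ts|\ts E_{T^\YY}(B)\ts g\rangle^{}_\YY \, = \, \sigma^\YY_g(B),
\]
where the penultimate step uses that $i^{\,\varPhi}$ is an isometry. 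The only delicate point is the passage from the scalar identity for $\langle g\ts|\ts T_t^\YY g\rangle$ to the operator-level identification $F = E_{T^\YY}$; I would handle this by the uniqueness statement in Stone's theorem (equivalently, by the SNAG construction), which is cleaner than a polarisation-plus-density argument based on the Gelfand transform and avoids any explicit functional-calculus bookkeeping.
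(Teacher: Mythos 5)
Your argument is correct and complete. Note that the paper itself offers no proof of this statement: it is quoted from \cite[Thm.~1]{BL-2} and closed with a \textit{qed} symbol, so there is nothing internal to compare against; your write-up supplies exactly the standard argument one expects to find in that reference. The chain of reasoning --- pointwise verification of $i^{\,\varPhi}\circ T_t^{\YY}=T_t^{\XX}\circ i^{\,\varPhi}$ from the factor relation, the companion identity by adjoints, the observation that $\cV=i^{\,\varPhi}(\LT)$ is reducing so that $P_{\cV}=i^{\,\varPhi}\circ p^{}_{\varPhi}$ commutes with the spectral projections, the identification of $F(B)=p^{}_{\varPhi}\ts E_{T^{\XX}}(B)\ts i^{\,\varPhi}$ with $E_{T^{\YY}}(B)$ via uniqueness in Stone's theorem, and the final isometry step for the scalar spectral measures --- is sound at every stage. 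The only point where a referee might ask for one more line is the claim that an operator commuting with all $T_t^{\XX}$ commutes with every $E_{T^{\XX}}(B)$; this follows because the two complex measures $B\mapsto\langle P_{\cV}f\ts|\ts E_{T^{\XX}}(B)g\rangle$ and $B\mapsto\langle f\ts|\ts E_{T^{\XX}}(B)P_{\cV}g\rangle$ have identical Fourier--Stieltjes transforms and the latter transform is injective on bounded measures over $\widehat{G}$. With that remark added, the proof stands on its own.
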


Let us now specify the dynamical systems we are dealing with and
discuss the necessary background from diffraction theory.  The
material is directly taken from \cite{BL}, where the proofs and
further details as well as references to related literature can be
found.

Let $C>0$ and a relatively compact open set $V \subset G$ be given.  A
measure $\omega\in \MM $ is called \emph{$(C,V)$-translation
  bounded\/} if $\sup_{t\in G} |\omega|(t + V) \leq C$.  It is called
{\em translation bounded\/} if there exists a pair $C,V$ so that
$\omega$ is $(C,V)$-translation bounded. The set of all
$(C,V)$-translation bounded measures is denoted by $\MCV$, the set of
all translation bounded measures by $\MTB$. In the vague topology, the
set $\MCV$ is a compact Hausdorff space. There is an obvious action of
$G$ on $\MTB$, again denoted by $\alpha$, given by
\[
   \alpha \! : \; G\times \MTB \; \longrightarrow \; \MTB \, , \quad
   (t,\omega) \, \mapsto \, \alpha^{}_{t}\ts \omega := \, \delta^{}_t *
   \omega \ts .
\]
Restricted to $\MCV$, this action is continuous.  Here, the
convolution of two convolvable measures $\rho, \sigma$ is defined by
\[
  (\rho*\sigma) (\varphi) \, =  \int_{G} \varphi(r + s)
  \dd \rho(r) \dd \sigma(s)
\]
for test functions $\varphi \in C_{\mathsf{c}} (G)$.

\begin{definition}
  $\Oomega$ is called a dynamical system of translation bounded
  measures on\/ $G$\/ $(${\rm TMDS} for short\/$)$ if there exist a
  constant\/ $C >0$ and a relatively compact open set\/ $V\subset G$
  such that\/ $\XX$ is a closed $\alpha$-invariant subset of\/ $\MCV$.
\end{definition}

Having introduced our systems, we can now discuss the necessary pieces
of diffraction theory. Let $\Oomega$ be a TMDS, equipped with an
$\alpha$-invariant measure $\mu\in \cPGO$. We will profit from
the introduction of the mapping
$ N = N^{\XX} \! : \; C_{\mathsf{c}} (G) \longrightarrow C(\XX)$
defined by $\varphi \mapsto N_{\varphi}$ with
\[
    N_\varphi (\omega) \, :=  \int_G \varphi (-s) \dd\omega(s)
    \, = \, \bigl( \varphi * \omega\bigr) (0) \ts .
\]
The mapping $N$ provides a natural way to consider $C_{\mathsf{c}}
(G)$ as a subspace of $L^{2} (\XX,\mu)$ for the given dynamical
system, which is important for our approach.  In particular, we will
need the subspace
\[
   \mathcal{U}^{\XX} \, := \, \mbox{Closure of the linear span of
   $N_\varphi$, with $\varphi \in C_{\mathsf{c}} (G)$, in $\LO$} \ts .
\]

There exists a unique measure $\gamma=\gamma_{\mu}$ on $G$, called the
\textit{autocorrelation measure}, or autocorrelation for short,
of the TMDS,  with
\[
   \gamma (\overline{\varphi} \ast \psi_{\!\_}) \; = \;
   \langle N_\varphi\ts | \ts N_\psi \rangle
\]
for all $\varphi,\psi \in C_{\mathsf{c}}(G)$, where $\psi_{\!\_}(s) :=
\psi(-s)$.  As usual, the convolution $\varphi \ast \psi$ is defined by
$(\varphi \ast \psi) (t) = \int_{G} \varphi (t- s ) \psi (s) \dd s$.

There is an explicit formula for $\gamma$ as follows. Choose an
arbitrary non-negative $\psi \in C_{\mathsf{c}} (G)$ with $\int_{G}
\psi (t) \dd t =1$. Then, we have
 \begin{equation}\label{eq:gen-auto-def}
   \gamma(\varphi) \, = \int_\XX \int_{\RR^d}
   \int_{\RR^d} \varphi(t + s) \, \psi(t)\dd \widetilde{\omega} (s)
   \dd \omega(t)\dd \mu(\omega),
\end{equation}
for every $\varphi \in C_{\mathsf{c}} (G)$, with $\widetilde{\omega}$
as defined in Section~\ref{sec:prelim}.  The measure $\gamma$ is
positive definite, and does \emph{not} depend on the choice of $\psi$;
see \cite{BL} for details.  Therefore, its Fourier transform
$\widehat{\gamma}$ exists and is a positive measure; compare
\cite[Prop.~8.6]{TAO}. It is called the \emph{diffraction measure} of
the TMDS.

\smallskip

\begin{remark}
  Let us emphasise that this concept of an autocorrelation does not
  rely on a local averaging procedure. Instead, it uses an averaging
  along the measure on the dynamical system, also known as an
  \emph{ensemble average}. This has the advantage that we can deal
  with rather general situations. In fact, not even ergodicity of the
  measure on the dynamical system is needed.  However, one then loses
  the connection to the notion of an autocorrelation of an individual
  member of the hull, which may become relevant for applications.
\end{remark}

The crucial connection between the spectral theory of the dynamical
system and the diffraction theory can be expressed in the following
way, as has been discussed in various places.

\begin{prop} \label{prop:diff-spec}
  Let\/ $\Oomega$ be a TMDS over $G$ with invariant probability
  measure\/ $\mu$, and let\/ $\widehat{\gamma}$ be the associated
  TMDS diffraction measure. Then, for every $\varphi \in C_{\mathsf{c}}
  (G)$, the spectral measure of the function\/ $N_\varphi$ is given by
\[
    \sigma^{}_{N_\varphi} \ts = \,
      |\widehat{\varphi}|^2 \, \widehat{\gamma} \ts .
\]
   This spectral measure is the diffraction measure of the factor
   TMDS defined by\/ $\omega \mapsto \varphi * \omega$.
\end{prop}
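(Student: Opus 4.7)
The plan is to establish both statements via a Dworkin-type computation: I compute the matrix coefficient $t\mapsto\langle N_\varphi\ts|\ts T_t N_\varphi\rangle$ as a convolution of $\gamma$ with the compactly supported function $\varphi*\widetilde{\varphi}$, and read off the spectral measure via Bochner's theorem and the convolution theorem; the same expression will then also be recognised as the diffraction of the factor TMDS given by $\omega\mapsto\varphi*\omega$.

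The first step is to unpack the action. From $T_t N_\varphi(\omega) = N_\varphi(\delta^{}_{-t}*\omega) = (\varphi*\omega)(t)$, one sees that $T_t N_\varphi = N_{\varphi_t}$ with $\varphi_t(s):=\varphi(s+t)$. Plugging this into the defining identity $\langle N_\varphi\ts|\ts N_\psi\rangle = \gamma(\overline{\varphi}*\psi_{\!\_})$ for $\gamma$ with $\psi=\varphi_t$, and using the elementary identities $(\varphi_t)_{\!\_} = \tau^{}_t\varphi_{\!\_}$ (where $\tau^{}_t f(s):=f(s-t)$) together with $\overline{\varphi}*\varphi_{\!\_} = (\varphi*\widetilde{\varphi})_{\!\_}$ (a consequence of the Hermitian symmetry of $\varphi*\widetilde{\varphi}$), a short manipulation yields
\[
    \langle N_\varphi\ts|\ts T_t N_\varphi\rangle \, = \,
    \bigl((\varphi*\widetilde{\varphi})*\gamma\bigr)(t)\ts.
\]
Since $\varphi*\widetilde{\varphi}\in C_{\mathsf{c}}(G)$ has ordinary Fourier transform $|\widehat{\varphi}|^{2}$, the convolution theorem applied to the transformable positive definite measure $\gamma$ identifies the Fourier transform of $(\varphi*\widetilde{\varphi})*\gamma$ with the positive measure $|\widehat{\varphi}|^{2}\,\widehat{\gamma}$. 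Uniqueness in Bochner's theorem, applied to the continuous positive definite function $t\mapsto\langle N_\varphi\ts|\ts T_t N_\varphi\rangle$, then forces $\sigma^{}_{N_\varphi} = |\widehat{\varphi}|^{2}\,\widehat{\gamma}$, which is the first assertion.

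For the second assertion, define the $G$-equivariant continuous map $\varPhi \! :\XX\longrightarrow\MTB$, $\omega\mapsto\varphi*\omega$. Because $\varphi\in C_{\mathsf{c}}(G)$, the image $\YY:=\varPhi(\XX)$ consists of absolutely continuous translation bounded measures with continuous density, and $(\YY,\alpha,\nu)$ with $\nu:=\varPhi_\ast\mu$ is a TMDS factor of $\Oomega$. A short direct computation yields the key relation $N^{\YY}_\psi\circ\varPhi = N^{\XX}_{\psi*\varphi}$ for every $\psi\in C_{\mathsf{c}}(G)$. Feeding this into $\langle N^{\YY}_{\psi_1}\ts|\ts N^{\YY}_{\psi_2}\rangle^{}_\YY = \langle N^{\XX}_{\psi_1*\varphi}\ts|\ts N^{\XX}_{\psi_2*\varphi}\rangle^{}_\XX$, applying the defining identity for $\gamma=\gamma^{}_\XX$, and using associativity of convolution then yields $\gamma^{}_\YY = (\varphi*\widetilde{\varphi})*\gamma$, so that $\widehat{\gamma^{}_\YY} = |\widehat{\varphi}|^{2}\,\widehat{\gamma} = \sigma^{}_{N_\varphi}$, as required.

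The main obstacle is pure bookkeeping: one has to track the three involutions $\varphi\mapsto\overline{\varphi}$, $\varphi\mapsto\varphi_{\!\_}$, $\varphi\mapsto\widetilde{\varphi}$ and their interaction with convolution, and verify that the convolution theorem applies in the needed generality ($\gamma$ is only translation bounded and positive definite, but $\varphi*\widetilde{\varphi}$ has compact support, so $(\varphi*\widetilde{\varphi})*\gamma$ is a bounded continuous function whose distributional Fourier transform is the positive measure $|\widehat{\varphi}|^{2}\,\widehat{\gamma}$). Once these identifications are in place, the proposition is essentially an abstract incarnation of Dworkin's argument, closely paralleling Eqs.~\eqref{eq:reg-auto}, \eqref{eq:reg-diffract} and \eqref{eq:Dworkin} from the Delone case.
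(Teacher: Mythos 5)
Your argument is correct and is essentially the paper's own route: the paper proves the first claim by citing \cite{BL} (where precisely this Dworkin-type computation of $\langle N_\varphi \ts|\ts T_t N_\varphi\rangle = \bigl((\varphi*\widetilde{\varphi})*\gamma\bigr)(t)$ is carried out) and obtains the second claim by generalising the explicit Delone-set calculations around Eqs.~\eqref{eq:reg-auto}, \eqref{eq:reg-diffract} and \eqref{eq:Dworkin}, which is exactly what you do. You have merely written out in the abstract LCAG setting the details the paper delegates to the literature, and the bookkeeping of the involutions and the application of the convolution theorem are handled correctly.
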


\begin{proof}
  In the form stated here, the first claim can be found explicitly in
  \cite{BL}; compare \cite{Dworkin,Lenz,LS,LM-2,DM} for related and
  partly even more general versions.

  The second claim follows from the explicit calculations that we have
  used above in the setting of Delone sets, which readily generalises
  to the setting of LCAGs.
\end{proof}

\begin{remark}\label{rem:spec-meas}
  Let us expand on the meaning of Proposition~\ref{prop:diff-spec} for
  $G=\RR^{d}$.  In general, the diffraction measure $\widehat{\gamma}$
  does not assign finite mass to $\RR^{d}$, and thus cannot be a
  spectral measure of $(\XX,\RR^{d},\mu)$. However,
  Proposition~\ref{prop:diff-spec} shows that replacing
  $\widehat{\gamma}$ by $\vert\widehat{\varphi}^{2}\rvert \,
  \widehat{\gamma}$, which reflects a smoothing by convolution on the
  level of the autocorrelation, yields a spectral measure for any
  $\varphi \in C_{\mathsf{c}} (\RR^d)$. In fact, it is possible to
  extend the result to show that, for any non-negative $h\in L^1
  (\widehat{\RR^d}, \widehat{\gamma})$, the measure $h\ts
  \widehat{\gamma}$ is a spectral measure.

  The argument for this extension can be sketched as follows. The
  proposition allows one to show that there is a unique isometric
  linear map
\[
   \varTheta \! : \, L^2(\widehat{\RR^d}, \widehat{\gamma})
   \longrightarrow L^2 (\XX,\mu) \ts ,
\]
mapping $\widehat{\varphi}$ to $N_\varphi$ for any $\varphi \in
C_{\mathsf{c}} (\RR^d)$; compare \cite{DM,LM,LM-2}. This map is not only
isometric, but also intertwines the translation action by $t\in \RR^d$
with the multiplication by $e^{\ii t (\cdot)}$ (as can easily be seen
for $\varphi \in C_{\mathsf{c}} (\RR^d)$ and then follows by
approximation in the general case).  Consider now $g : = \varTheta
\bigl(\sqrt{h}\, \bigr)$, where $\sqrt{h}$ is an $L^{2}$-function, and
the associated spectral measure $\sigma^{}_{\! g}$. Its (inverse)
Fourier transform is then given as $t \mapsto\langle g \ts | \ts T_t g
\rangle$. Now, a short calculation invoking the properties of
$\varTheta$ shows that
\[
   \langle g \ts | \ts T_t g  \rangle  \, = \,
   \big\langle \varTheta (\sqrt{h}\ts) \ts | \ts \varTheta
   (e^{\ii t (\cdot)} \sqrt{h}\ts) \big\rangle \, =
   \int_{\widehat{\RR^d} } e^{\ii t k}\ts
    h (k) \dd\widehat{\gamma} (k) \ts .
\]
Consequently, the (inverse) Fourier transform of $\sigma^{}_{\! g}$
equals that of $h\ts \widehat{\gamma}$, and the desired statement
follows.
\end{remark}

Returning to the general setting, we can describe the \emph{main idea}
behind our subsequent reasoning as
follows. Proposition~\ref{prop:diff-spec} implies that the diffraction
controls the dynamical spectral theory of the subspace
$\mathcal{U}^{\XX}$.  Whenever $\Ttheta$ is a TMDS factor of $\Oomega$
(by which we mean a factor which is also a TMDS), the diffraction of
$\YY$ will control the dynamical spectral theory of the subspace
$\mathcal{U}^{\YY}$ of the factor. Via the factor map, this means that
the diffraction of $\YY$ actually controls the dynamical spectral
theory of the subspace $i^{\,\varPhi} (\mathcal{U}^{\YY})$ of the
original dynamical system. If there are sufficiently many factors,
their diffraction will control the complete dynamical spectral theory.
Here, the concept of `control the dynamical spectrum' is made precise
in the above definition of a complete spectral invariant.  The concept
of `sufficiently many' factors is given a precise meaning as follows.

\smallskip

\begin{definition} Let $\Oomega$ be a TMDS with invariant probability
  measure $\mu$. Let $(\YY_\iota, \beta_\iota)$ with $\iota\in J$ be a
  family of TMDS factors with factor maps $\varPhi_\iota$ and induced
  measures $\nu_{\iota} := (\varPhi_{\iota})_{*} (\mu)$. Then, this
  family is said to be \emph{total} if the linear hull of the spaces
  $i^{\varPhi_\iota} (\mathcal{U}^{\YY_\iota})$, with $\iota\in J$, is
  dense in $\LO$.
\end{definition}

The main result of this section now reads as follows.

\begin{theorem}\label{main-abstract}
  Let\/ $\Oomega$ be a TMDS over\/ $G$ with invariant probability
  measure\/ $\mu$ and corresponding unitary representation\/ $T$.
  Let\/ $(\YY_\iota, \alpha)$, with $\iota\in J$, be a total family of
  TMDS factors equipped with the induced measures\/ $\nu_\iota:=
  (\varPhi_{\iota})_{*} (\mu)$ and associated diffraction measures\/
  $\widehat{\gamma_\iota}$. Then, the measures\/
  $\widehat{\gamma_\iota}$, with $\iota\in J$, constitute a complete
  spectral invariant of\/ $T$.
\end{theorem}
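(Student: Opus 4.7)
The plan is to verify directly the defining condition of a complete spectral invariant, namely that for a Borel set $A\subset\widehat{G}$ one has $E_T(A)=0$ if and only if $\widehat{\gamma_\iota}(A)=0$ for every $\iota\in J$. The two workhorses are Proposition~\ref{prop:diff-spec}, which identifies $\sigma^{\YY_\iota}_{N^{\YY_\iota}_\varphi}=\lvert\widehat{\varphi}\rvert^{2}\ts\widehat{\gamma_\iota}$, and Theorem~\ref{factor-spectralmeasure}, which gives $\sigma^{\XX}_{i^{\ts\varPhi_\iota}(g)}=\sigma^{\YY_\iota}_{g}$ for every $g\in L^{2}(\YY_\iota,\nu_\iota)$. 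Composed, they yield that the $\XX$-spectral measure of $i^{\ts\varPhi_\iota}(N^{\YY_\iota}_\varphi)$ equals $\lvert\widehat{\varphi}\rvert^{2}\ts\widehat{\gamma_\iota}$, which is the bridge between the left-hand and right-hand sides of the desired equivalence.

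For the forward implication, assume $E_T(A)=0$. Then $\sigma^{\XX}_{g}(A)=\langle g\ts|\ts E_T(A)\ts g\rangle=0$ for every $g\in L^{2}(\XX,\mu)$; specialising to $g=i^{\ts\varPhi_\iota}(N^{\YY_\iota}_\varphi)$ yields $\int_{A}\lvert\widehat{\varphi}\rvert^{2}\dd\widehat{\gamma_\iota}=0$ for all $\iota$ and all $\varphi\in C_{\mathsf{c}}(G)$. To conclude $\widehat{\gamma_\iota}(A)=0$, I would invoke density of $\{\widehat{\varphi}\mid\varphi\in C_{\mathsf{c}}(G)\}$ in $C_{0}(\widehat{G})$ (a Stone--Weierstrass-type fact for the Fourier image): for any compact $K\subset\widehat{G}$, choose $\varphi$ with $\lvert\widehat{\varphi}\rvert\ge 1/2$ on $K$, so that the integral bound forces $\widehat{\gamma_\iota}(A\cap K)=0$. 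Since $\widehat{\gamma_\iota}$ is a translation-bounded, hence Radon, measure, inner regularity lifts this to $\widehat{\gamma_\iota}(A)=0$.

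For the reverse implication, assume $\widehat{\gamma_\iota}(A)=0$ for every $\iota$. The functional $f\mapsto\sigma^{\XX}_{f}(A)=\lVert E_T(A)\ts f\rVert^{2}$ is norm-continuous on $L^{2}(\XX,\mu)$, so it suffices to verify its vanishing on a dense subspace. By totality and the triangle inequality $\lVert E_T(A)\sum_{j}c_{j}\ts g_{j}\rVert\le\sum_{j}\lvert c_{j}\rvert\,\lVert E_T(A)\ts g_{j}\rVert$, it in fact suffices to check vanishing on each $i^{\ts\varPhi_\iota}(\mathcal{U}^{\YY_\iota})$ separately. Within $\mathcal{U}^{\YY_\iota}$, linearity of $\varphi\mapsto N_{\varphi}^{\YY_\iota}$ means any finite combination has the form $N^{\YY_\iota}_{\psi}$ with $\psi\in C_{\mathsf{c}}(G)$, whose spectral measure $\lvert\widehat{\psi}\rvert^{2}\widehat{\gamma_\iota}$ (by Proposition~\ref{prop:diff-spec}) vanishes on $A$. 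Passing to norm limits, again by continuity of $h\mapsto\lVert E_{T^{\YY_\iota}}(A)\ts h\rVert^{2}$, extends this to all $h\in\mathcal{U}^{\YY_\iota}$; Theorem~\ref{factor-spectralmeasure} then transports the vanishing back to the $\XX$-side via $\sigma^{\XX}_{i^{\ts\varPhi_\iota}(h)}=\sigma^{\YY_\iota}_{h}$, closing the argument.

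The principal technical obstacle is the final step of the forward direction: converting the vanishing of the weighted integrals $\int_{A}\lvert\widehat{\varphi}\rvert^{2}\dd\widehat{\gamma_\iota}$ (for all $\varphi\in C_{\mathsf{c}}(G)$) into $\widehat{\gamma_\iota}(A)=0$. This needs both the density of the Fourier-image algebra in $C_{0}(\widehat{G})$ and the Radon regularity of the generally unbounded measure $\widehat{\gamma_\iota}$; the latter is available because the diffraction of a TMDS is translation bounded, as recalled from \cite{BF,BL}. All remaining steps are a packaging of Proposition~\ref{prop:diff-spec} and Theorem~\ref{factor-spectralmeasure} with elementary Hilbert-space continuity, so they present no serious difficulty.
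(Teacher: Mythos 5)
Your proposal is correct and follows essentially the same route as the paper: both hinge on the chain $\int_A |\widehat{\varphi}|^2 \dd\widehat{\gamma_\iota} = \sigma^{\YY_\iota}_{N_\varphi^{\YY_\iota}}(A) = \sigma^{\XX}_{i^{\ts\varPhi_\iota}(N_\varphi^{\YY_\iota})}(A) = \langle i^{\ts\varPhi_\iota}(N_\varphi^{\YY_\iota}) \ts|\ts E_T(A)\ts i^{\ts\varPhi_\iota}(N_\varphi^{\YY_\iota})\rangle$ obtained from Proposition~\ref{prop:diff-spec} and Theorem~\ref{factor-spectralmeasure}, followed by the totality assumption. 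You merely spell out the two steps the paper dismisses as ``standard reasoning'' and ``easily gives'' (the covering argument with $|\widehat{\varphi}|\ge 1/2$ on compacta plus inner regularity, and the norm-continuity of $f\mapsto\lVert E_T(A)f\rVert^2$ on the dense linear hull), which is a sound elaboration rather than a different proof.
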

\begin{proof} We have to show that $E_T (A) = 0$ holds for a Borel set
  $A\subset \widehat{G}$ if and only if $\widehat{\gamma_\iota} (A) =
  0$ holds for all $\iota \in J$. For a Borel set $A\subset
  \widehat{G}$ and a function $\varphi \in C_{\mathsf{c}} (G)$, a short
  calculation gives
\[
    \int_{\widehat{G}}  |\widehat{\varphi}|^2 \, 1_A \dd
    \widehat{\gamma_\iota} \, =
    \int_{\widehat{G}}  1_A \dd \sigma_{N_\varphi^{\YY_\iota} }^{\YY_\iota}
  \, = \int_{\widehat{G}}  1_A \dd \sigma_{   i^{\ts\varPhi_\iota}
     ( N_\varphi^{\YY_\iota} ) }^{\, \XX}
  \, = \, \big\langle i^{\ts\varPhi_\iota}  (N_\varphi^{\YY_\iota})\ts | \ts E_T (A)
     \ts i^{\ts\ts\varPhi_\iota}(N_\varphi^{\YY_\iota})\big\rangle .
\]

Here, $1_A$ denotes the characteristic function of $A$. The first
equality is a consequence of Proposition~\ref{prop:diff-spec},
applied to the TMDS $(\YY_\iota, \alpha)$. The second follows from
Theorem~\ref{factor-spectralmeasure}, and the last
from the definition of the spectral measure. \smallskip

Now, by standard reasoning, $\widehat{\gamma_\iota} (A) = 0$ if and
only if $ \int_{\widehat{G}} |\widehat{\varphi}|^2\, 1_A \dd
\widehat{\gamma} =0$ for all $\varphi \in C_{\mathsf{c}} (G)$. Also,
by our assumption of totality, $E_T (A) = 0$ if and only if
$\big\langle i^{\ts\varPhi_\iota} (N_\varphi^{\YY_\iota}) \ts | \ts
E_T (A)\ts i^{\ts\varPhi_\iota}(N_\varphi^{\YY_\iota})\big\rangle =0$
for all $\varphi \in C_{\mathsf{c}} (G)$ and $\iota \in J$.  This
easily gives the desired equivalence.
\end{proof}

We finish this section by briefly indicating how the discussion of
Delone dynamical systems from Section~\ref{sec:Delone} fits into the
present context (and is in fact contained in it). For simplicity, and
as this is the case in the previous section, we restrict our attention
to $G = \RR^d$ (even though all considerations work in the
general case as well).  First of all, let us note that via the map
\[
   \delta \! : \, \{\mbox{FLC sets in $G$}\} \;\longrightarrow\;
   \{\mbox{translation bounded measure on $G\ts $}\}\ts , \quad
   \vL \mapsto \delta^{}_{\!\vL} = \sum_{x\in \vL}
    \delta_x \ts ,
\]
any FLC set can actually be considered as a translation bounded
measure.  In particular, any Delone dynamical system can be considered
as a TMDS, and the theory developed in this section applies.

Let $\vL$ now be an FLC Delone set and $(\XX,\alpha)$ the
associated dynamical system.  Then, any $K$-cluster $P$ of $\vL$
gives rise to a factor
\[
   \YY  \, = \,  \YY_{K,P}
   \, := \, \{T^{}_{K,P} (\vL') \mid \vL' \in \XX (\vL) \} \ts ,
\]
compare Eq.~\eqref{eq:Y-def}, with factor map
\[
  \varPhi \, = \, \varPhi_{K, P}\nts : \; \XX
  \longrightarrow \YY \ts , \quad X \mapsto T^{}_{K,P} (X) \ts .
\]
This factor will be called the factor \textit{derived from
  $(\XX,\alpha)$ via the $K\nts$-cluster $P$ of $\vL$}.  In the
uniquely ergodic case, the autocorrelation $\gamma^{}_{K,P}$ and the
diffraction $\widehat{\gamma^{}_{K,P}}$ of the factor $\YY_{K,P}$ have
been calculated in Section~\ref{sec:Delone}, see
Eq.~\eqref{eq:gam-om}, under the name of $\gamma^{}_\omega$ and
$\widehat{\gamma^{}_\omega}$, respectively.

The $N$-function associated to this factor is given by
\[
   N  =  N^{K,P} \! : \; C_{\mathsf{c}} (G)\longrightarrow C(\YY) \ts ,
   \quad N_\varphi (\varGamma)  = \sum_{x\in \varGamma} \varphi ( -x) \ts .
\]
Thus, the function $N^{K,P} \!\circ \varPhi_{K, P}$ is given by the
formula
\[
   N^{K,P} \!\circ \varPhi_{K, P} ( X) \, =
  \sum_{ x\in T^{}_{K,P} (X) } \varphi (- x) \ts
\]
Note that $N^{K,P} \!\circ \varPhi_{K, P} = \chi_{K,P}^{(\varphi)}$
with $\chi_{K,P}^{(\varphi)}$ as considered above in
Eq.~\eqref{eq:chi-def}.  Proposition~\ref{prop:chi-complete} then
gives the following result, where we need not assume ergodicity.

\begin{prop}\label{prop:totalfam} 
  Let\/ $\vL\subset \RR^{d}$ be an FLC Delone set and\/ $(\XX,\alpha)$
  the associated dynamical system, with\/ $\alpha^{}_{t} X = t +X$,
  and assume that an invariant measure\/ $\mu$ is given. Let\/ $J$ be
  the set of all pairs\/ $(K,P)$ so that\/ $K$ is compact and\/ $P$ is
  a\/ $K$-cluster of\/ $\vL$.  Then, for any\/ $\iota\in J$, the
  factor\/ $(\YY_\iota,\alpha)$ inherits a canonical measure\/
  $\mu_{\iota}$ that is induced by\/ $\mu$, and the family of all such
  factors is total for\/ $(\XX,\alpha,\mu)$.  \qed
\end{prop}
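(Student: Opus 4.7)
The plan is to establish two things in turn: first, that each derived factor $(\YY_\iota,\alpha)$ inherits a canonical invariant measure $\mu_\iota$ from $\mu$; second, that the family of subspaces $\{i^{\,\varPhi_\iota}(\mathcal{U}^{\YY_\iota})\}_{\iota\in J}$ jointly spans a dense subspace of $\LO$, which is precisely the defining property of a total family.

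For the first claim, I would check that $\YY_\iota$ is itself a TMDS. The map $X \mapsto T^{}_{K,P}(X)$ sends $\XX$ continuously (in the local/vague topology) into translation bounded measures, and $T^{}_{K,P}(X)\subset X$ is uniformly translation bounded because $\vL$ is Delone and hence uniformly discrete, a property shared by every element of $\XX$. Since $\varPhi^{}_{K,P}$ intertwines the translation action, the pushforward $\mu_\iota := (\varPhi^{}_{K,P})_*\mu$ is an $\alpha$-invariant probability measure on $\YY_\iota$. Note that no ergodicity hypothesis is needed at this step; $\mu_\iota$ makes sense for any $\mu\in\cPGO$.

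For the second claim, the crucial identification, already recorded in the discussion preceding the proposition, is
\[
   i^{\,\varPhi_\iota}\bigl(N_\varphi^{\YY_\iota}\bigr)(X)
   \, = \, N_\varphi^{\YY_\iota}\bigl(\varPhi^{}_{K,P}(X)\bigr)
   \, = \sum_{x\in T^{}_{K,P}(X)} \varphi(-x)
   \, = \, \chi^{(\varphi)}_{K,P}(X)
\]
for every $(K,P)\in J$ and every $\varphi\in C_{\mathsf{c}}(\RR^{d})$. Consequently, the linear hull of $\bigcup_{\iota\in J} i^{\,\varPhi_\iota}(\mathcal{U}^{\YY_\iota})$ contains the linear span of all functions $\chi^{(\varphi)}_{K,P}$. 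By Proposition~\ref{prop:chi-complete}, this span is dense in $C(\XX)$ with respect to the supremum norm. Since $\XX$ is compact and $\mu$ is a probability measure, supremum-norm convergence implies convergence in $L^{2}(\XX,\mu)$, and $C(\XX)$ is dense in $\LO$ by standard measure-theoretic arguments. Combining these two facts yields the desired totality.

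The main obstacle is really just conceptual: one must ensure that the passage from the ergodic setting of Section~\ref{sec:Delone} to the general invariant-measure setting introduces no hidden dependence on unique ergodicity. It does not, because $\mu_\iota$ is defined directly as a pushforward, and Proposition~\ref{prop:chi-complete} is a purely topological density statement about $C(\XX)$, independent of the choice of $\mu$. This is precisely the reason the proposition can be stated without any ergodicity assumption, in contrast to the corollary formulated earlier in Section~\ref{sec:Delone}.
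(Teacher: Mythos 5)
Your proposal is correct and follows essentially the same route as the paper, which states the proposition without a separate proof precisely because it is the combination of the identification $N^{K,P}_{\varphi}\circ\varPhi^{}_{K,P}=\chi^{(\varphi)}_{K,P}$ with the sup-norm density of the span of these functions from Proposition~\ref{prop:chi-complete}, together with the pushforward construction of $\mu_{\iota}$. Your additional remarks (verifying the TMDS property of the factor and noting that no ergodicity enters) only make explicit what the paper leaves implicit.
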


Note that the result depends on the correct choice of the measures
$\mu_{\iota}$ on the factors, which enter the autocorrelations via the
formula from Eq.~\eqref{eq:gen-auto-def}.  As a consequence of
Proposition~\ref{prop:totalfam} and Theorem~\ref{main-abstract}, we
can now generalise the main result of Section~\ref{sec:Delone} as
follows.

\begin{theorem}\label{thm:gen-thm}
  Let\/ $\vL\subset \RR^{d}$ be an FLC Delone set and\/ $(\XX,\alpha)$
  the associated dynamical system, equipped with an invariant
  probability measure\/ $\mu$.  Let\/ $J$ be the set of all pairs
  $(K,P)$ such that\/ $K\subset \RR^{d}$ is compact and\/ $P$ is a\/
  $K$-cluster of\/ $\vL$.  Then, for any\/ $\iota\in J$, the derived
  factor\/ $(\YY_{\iota}, \alpha)$ inherits a canonical probability
  measure\/ $\mu^{}_{\iota}$ that is induced by\/ $\mu$, and the
  family of diffraction measures\/ $\widehat{\gamma_\iota}$, with\/
  $\iota\in J$, is a complete spectral invariant for\/ $(\XX,\alpha)$.
  \qed
\end{theorem}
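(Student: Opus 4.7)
The plan is to obtain Theorem~\ref{thm:gen-thm} as an immediate consequence of Proposition~\ref{prop:totalfam} combined with the abstract Theorem~\ref{main-abstract}. In essence, all the real work has been done in the preparatory results of this and the previous section, and what remains is to verify that the Delone setting fits the abstract framework and then to synthesise.

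First, I would verify that $(\XX,\alpha)$ is a TMDS. The embedding $\vL' \mapsto \delta^{}_{\vL'}$ discussed at the start of this section sends the FLC hull into $\cM_{C,V}(\RR^{d})$ for constants $C,V$ determined by the local complexity of $\vL$, and the image is closed and $\alpha$-invariant. For each $\iota = (K,P) \in J$, the map $X \mapsto \delta^{}_{T^{}_{K,P}(X)}$ realises $\YY_{\iota}$ as a closed, $\alpha$-invariant subset of a suitable $\cM_{C',V}(\RR^{d})$, using that $T^{}_{K,P}(\vL')\subset \vL'$ inherits the FLC property and a uniform packing bound. The pushforward $\mu^{}_{\iota} := (\varPhi_{K,P})^{}_{\ast}(\mu)$ then supplies the required canonical invariant probability measure on $\YY_{\iota}$, so that each $(\YY_{\iota},\alpha,\mu^{}_{\iota})$ is a TMDS factor of $(\XX,\alpha,\mu)$, with autocorrelation and diffraction $\gamma^{}_{\iota}$ and $\widehat{\gamma^{}_{\iota}}$ computed via Eq.~\eqref{eq:gen-auto-def}, in agreement with Eq.~\eqref{eq:gam-om} in the uniquely ergodic case.

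Next, I would invoke Proposition~\ref{prop:totalfam} directly to conclude that the family $\{(\YY_{\iota},\alpha)\}_{\iota\in J}$ is total for $(\XX,\alpha,\mu)$. The underlying reason, already isolated there, is the identification $N^{\YY_{\iota}}_{\varphi} \circ \varPhi_{K,P} = \chi^{(\varphi)}_{K,P}$ for $\varphi \in C_{\mathsf{c}}(\RR^{d})$, which shows that $i^{\,\varPhi_{\iota}}(\cU^{\YY_{\iota}})$ is generated by the functions $\chi^{(\varphi)}_{K,P}$. By Proposition~\ref{prop:chi-complete}, the linear span of all such $\chi^{(\varphi)}_{K,P}$ (as $(K,P)$ ranges over $J$ and $\varphi$ over $C_{\mathsf{c}}(\RR^{d})$) is dense in $C(\XX)$ in the supremum norm, and hence also dense in $L^{2}(\XX,\mu)$; this is precisely the totality required by Theorem~\ref{main-abstract}.

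Finally, applying Theorem~\ref{main-abstract} to this total family yields that $\{\widehat{\gamma^{}_{\iota}}\}_{\iota\in J}$ is a complete spectral invariant for the Koopman representation $T$ on $L^{2}(\XX,\mu)$, which is exactly the claim. The only step of genuine substance is the density input from Proposition~\ref{prop:chi-complete}; everything else amounts to translating concrete Delone objects into their abstract TMDS counterparts under the Dirac-comb identification. In that sense, the main obstacle — producing a sufficiently rich class of continuous functions on $\XX$ that can be realised through diffractions of factors of the same kind — is not encountered here but was already overcome in Section~\ref{sec:Delone} via the regularised cluster functions $\chi^{(\varphi)}_{K,P}$.
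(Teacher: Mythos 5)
Your proposal is correct and follows exactly the paper's route: the theorem is derived by checking that the Dirac-comb embedding makes the hull and its derived factors into TMDS, obtaining the canonical measures as pushforwards, invoking Proposition~\ref{prop:totalfam} (itself resting on Proposition~\ref{prop:chi-complete} via the identification $N^{K,P}\!\circ\varPhi_{K,P}=\chi^{(\varphi)}_{K,P}$) for totality, and then applying Theorem~\ref{main-abstract}. The paper presents this as an immediate consequence of those two results, so your slightly more detailed write-up matches its argument in substance.
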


If the original system in Theorem~\ref{thm:gen-thm} is uniquely
ergodic, then so are its factors. Thus, any of the factors carries a
canonical invariant probability measure that gives rise to an
autocorrelation and hence to a diffraction measure.  Thus, we obtain
the following corollary, which recovers the main result
of Section~\ref{sec:Delone}.

\begin{coro}\label{coro:Delone}
  Let\/ $\vL\subset \RR^{d}$ be an FLC Delone set and\/ $(\XX,\alpha)$
  the associated dynamical system, which we assume to be uniquely
  ergodic.  Let\/ $J$ be the set of all pairs $(K,P)$ such that\/ $K$
  is compact and\/ $P$ is a\/ $K$-cluster of\/ $\vL$.  Then, the
  family of diffraction measures\/ $\widehat{\gamma_\iota}$, with\/
  $\iota\in J$, is a complete spectral invariant for\/ $(\XX,\alpha)$.
  \qed
\end{coro}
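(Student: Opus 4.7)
The plan is to derive the corollary as a direct specialisation of Theorem~\ref{thm:gen-thm}, since all the essential content is already packaged in the more general statement. First I would observe that under the hypothesis of unique ergodicity, the topological dynamical system $(\XX,\alpha)$ carries a unique invariant probability measure $\mu$, so there is no ambiguity in the choice of invariant measure needed to define the autocorrelation via Eq.~\eqref{eq:gen-auto-def}.

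Next I would invoke Proposition~\ref{prop:transfer} to transfer unique ergodicity along the factor maps: for every pair $\iota = (K,P) \in J$, the derived factor $(\YY_\iota,\alpha)$ is again uniquely ergodic, and its unique invariant probability measure is forced to coincide with the induced measure $\mu^{}_\iota = (\varPhi_\iota)_\ast(\mu)$. This is the only point where unique ergodicity enters: it turns the canonical choice of $\mu^{}_\iota$ from a matter of selection into an intrinsic feature of the factor, and it guarantees that the associated autocorrelation $\gamma^{}_\iota$ and diffraction measure $\widehat{\gamma^{}_\iota}$ are well-defined intrinsic invariants of $\YY^{}_\iota$, without reference back to the chosen $\mu$ on $\XX$.

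With these measures in place, I would then apply Theorem~\ref{thm:gen-thm} verbatim to the system $(\XX,\alpha,\mu)$ and the index set $J$. That theorem yields exactly the desired conclusion, namely that $\{\widehat{\gamma^{}_\iota}\}_{\iota\in J}$ is a complete spectral invariant of the unitary representation $T$ on $\LO$. Since Theorem~\ref{thm:gen-thm} itself rests on Proposition~\ref{prop:totalfam} (totality of the family of derived factors, via Proposition~\ref{prop:chi-complete}) and on Theorem~\ref{main-abstract} (totality implies complete spectral invariance), there is nothing further to prove.

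There is essentially no obstacle here; the only subtlety worth flagging is conceptual rather than technical, namely ensuring the reader sees that the diffractions $\widehat{\gamma^{}_\iota}$ of the corollary coincide with those appearing in Theorem~\ref{thm:gen-thm}. This identification is immediate once one notes, via Proposition~\ref{prop:transfer}, that the canonical measure on each factor in the uniquely ergodic setting is necessarily $\mu^{}_\iota = (\varPhi_\iota)_\ast(\mu)$, so that the autocorrelation computed intrinsically on $\YY^{}_\iota$ (as in Eq.~\eqref{eq:gam-om}) agrees with the autocorrelation computed via Eq.~\eqref{eq:gen-auto-def} using $\mu^{}_\iota$. Once this identification is made, the corollary follows.
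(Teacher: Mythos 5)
Your proposal is correct and follows essentially the same route as the paper: the corollary is obtained by specialising Theorem~\ref{thm:gen-thm}, using Proposition~\ref{prop:transfer} to see that unique ergodicity passes to every derived factor, so that each factor carries a canonical (pushforward) invariant measure whose autocorrelation and diffraction are intrinsically determined. Your additional remark identifying the intrinsic diffraction of the factor with the one used in Theorem~\ref{thm:gen-thm} matches the paper's own observation, made right after the corollary, that in the uniquely ergodic case it does not matter whether one computes the diffraction from an individual element or from the dynamical system.
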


In the situation of Corollary~\ref{coro:Delone}, it does not matter
whether one uses the diffraction measure of an element of $\XX$ (as
we did in Section~\ref{sec:Delone}) or that of the dynamical system
(which is our main focus in this section).

It is also possible to embed the situation of
Section~\ref{sec:symbolic} into the abstract setting. Indeed, any
symbolic sequence gives rise to a weighted Dirac comb on $\ZZ$,
where the weights are chosen according to the corresponding symbol in
the sequence. The analogous comment applies to subshifts under the
action of $\ZZ^{d}$. Since the connection between the spectral and the
diffraction measures is more concrete in this case, the approach
described in Section~\ref{sec:symbolic} is ultimately more useful
here.

\begin{remark}
  In the situation of Theorem~\ref{thm:gen-thm}, we can actually
  choose a countable index set for $J$. More precisely, it suffices to
  consider compact sets with are balls around the origin whose radius
  is an integer number. For each such ball, there are then only
  finitely many clusters, due to the FLC assumption. In some
  favourable cases, the set $J$ can be reduced even further, as
  briefly discussed in Section~\ref{sec:Delone}.
\end{remark}

\begin{remark}
  In the case of symbolic dynamics, compare Remark~\ref{rem-3}, one
  can actually restrict oneself to considering topological conjugacies
  rather than (more general) factors.  At this point, we do not know
  whether one can also derive such a stronger statement in more
  general FLC situations.  Moreover, even in the case of symbolic
  dynamics, examples show that these conjugacies can be more
  complicated than the factors needed to obtain the full dynamical
  spectrum. Therefore, in practice, our theorems about the collection
  of factors being sufficient to obtain the full dynamical spectrum
  may be the more viable way to proceed.
\end{remark}

While the main thrust of our work is certainly the situation where the
diffraction is not pure point, some new insights may also be gained
from our considerations in the pure point situation. This is briefly
discussed in the Appendix. \smallskip

In many explicitly treated examples, it turned out that very few
factors (often just one, in fact) were needed to explore the maximal
spectral measure, which is then an interesting alternative to
Fraczek's theorem from \cite{Frac}.  It is thus an obvious question to
search for good sufficient criteria to assess the totality of a family
of factors.

Our general result of Theorem~\ref{thm:gen-thm} neither depends on
ergodicity nor on the FLC property. Nevertheless, it remains to
be analysed how it can concretely be used in the further understanding
of such more general dynamical systems.

\section{Appendix:\ A brief look at pure point diffraction in the
light of factors}

As is well-known (compare our discussion in the Introduction), pure
point diffraction is equivalent to pure point dynamical spectrum. One
key result of \cite{BL} gives the following precise formulation for
the case $G=\RR^{d}$ as follows.

\begin{prop}
  Let\/ $(\XX,\RR^{d},\mu)$ be a TMDS with diffraction measure\/
  $\widehat{\gamma}$. Then, $\widehat{\gamma}$ is a pure point measure
  if and only if\/ $(\XX,\RR^{d},\mu)$ has pure point dynamical
  spectrum. In this case, the group of eigenvalues of\/
  $(\XX,\RR^{d},\mu)$ is the subgroup of\/ $\RR^{d}$ that is generated
  by the Fourier--Bohr spectrum of the autocorrelation, as defined in
  Eq.~\eqref{eq:FB}.  \qed
\end{prop}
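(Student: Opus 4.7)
The plan is to exploit the isometric embedding $\varTheta \! : \, L^{2}(\widehat{\RR^{d}},\widehat{\gamma}) \longrightarrow L^{2}(\XX,\mu)$ introduced in Remark~\ref{rem:spec-meas}, which sends $\widehat{\varphi}$ to $N_{\varphi}$ and intertwines pointwise multiplication by $e^{\ii t (\cdot)}$ with the translation representation $T_{t}$. Its image is precisely the subspace $\mathcal{U}^{\XX}$, and by Proposition~\ref{prop:diff-spec} the spectral measure of $\varTheta(h)$ equals $|h|^{2}\,\widehat{\gamma}$ for each $h$ in the source space. This dictionary reduces spectral questions on $\mathcal{U}^{\XX}$ to measure-theoretic questions about $\widehat{\gamma}$.

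The easy direction is quick. If $(\XX,\RR^{d},\mu)$ has pure point dynamical spectrum, then every spectral measure is pure point; in particular $|\widehat{\varphi}|^{2}\,\widehat{\gamma}$ is pure point for every $\varphi \in C_{\mathsf{c}}(\RR^{d})$. Since $|\widehat{\varphi}|^{2}$ can be made strictly positive on any prescribed relatively compact open subset of $\widehat{\RR^{d}}$, and pure pointedness passes to positive multiples on such subsets, varying $\varphi$ forces $\widehat{\gamma}$ itself to be pure point.

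For the converse, pure pointedness of $\widehat{\gamma}$ means that $L^{2}(\widehat{\RR^{d}},\widehat{\gamma})$ admits an orthonormal basis $\{1^{}_{\{k\}}\}_{k\in S^{}_{\mathrm{FB}}}$; applying $\varTheta$ produces eigenvectors $f^{}_{k} := \varTheta(1^{}_{\{k\}})$ of $T_{t}$ with eigenvalue $k$ that furnish an orthogonal basis of $\mathcal{U}^{\XX}$. The heart of the proof, and in my view the main obstacle, is to upgrade pure pointedness from $\mathcal{U}^{\XX}$ to all of $L^{2}(\XX,\mu)$, since a priori the latter is strictly larger. My plan is first to show that the $f^{}_{k}$ admit bounded, continuous representatives via the standard regularisation available in the TMDS setting, and then to exploit that for bounded eigenfunctions the pointwise products $f^{}_{k}\,\overline{f^{}_{l}}$ are again eigenfunctions, with eigenvalue $k-l$. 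The closed linear span of such products is a translation-invariant $*$-subalgebra of $C(\XX)$ containing the constants, which separates points of $\XX$ up to $\mu$-null sets because distinct $\mu$-generic points lie in different orbits and are therefore distinguished by some bounded eigenfunction. A Stone--Weierstrass argument then yields density in $C(\XX)$, and hence in $L^{2}(\XX,\mu)$.

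The identification of the group of eigenvalues then falls out. The construction above exhibits eigenfunctions at every $k \in S^{}_{\mathrm{FB}}$, and since the set of eigenvalues is closed under sums and negations (from multiplication and conjugation of bounded eigenfunctions), it contains the subgroup $\langle S^{}_{\mathrm{FB}}\rangle$. Conversely, by the density step of the previous paragraph, every element of $L^{2}(\XX,\mu)$ is an $L^{2}$-limit of finite linear combinations of products of the $f^{}_{k}$, so any eigenvalue of $T$ arises as a sum of elements of $S^{}_{\mathrm{FB}}$ and thus lies in $\langle S^{}_{\mathrm{FB}}\rangle$. Equality of the two groups follows.
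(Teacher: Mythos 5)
The paper itself gives no proof of this proposition: it is quoted from \cite{BL} and stated without argument. Your proposal is therefore measured against the proof in \cite{BL}, whose overall architecture you have essentially reproduced: identify the spectral theory of the subspace $\mathcal{U}^{\XX}$ with $\widehat{\gamma}$ via the map $\varTheta$ and Proposition~\ref{prop:diff-spec}, and then bootstrap from $\mathcal{U}^{\XX}$ to all of $L^{2}(\XX,\mu)$ by an algebra-plus-Stone--Weierstrass argument. Your easy direction is correct.

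The hard direction, however, has two genuine gaps. First, you assert that the eigenfunctions $f_{k}=\varTheta(1^{}_{\{k\}})$ ``admit bounded, continuous representatives via the standard regularisation''. They do not in general: $f_{k}$ is (up to normalisation) the spectral projection $E_{T}(\{k\})N_{\varphi}$, hence only an $L^{2}$-limit; continuity of eigenfunctions is a delicate matter even for uniquely ergodic systems (cf.\ \cite{Lenz}), and boundedness via the constant-modulus argument requires ergodicity, which the proposition does not assume. Second, and more seriously, your Stone--Weierstrass step is circular: you justify that the algebra generated by the $f_{k}$ separates $\mu$-a.e.\ points by saying that generic points in distinct orbits ``are therefore distinguished by some bounded eigenfunction''. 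That eigenfunctions separate enough points is essentially the statement you are trying to prove (for a system with a weakly mixing component they separate nothing), so it cannot serve as input. The repair, which is the route taken in \cite{BL}, is to run Stone--Weierstrass on the algebra generated by the functions $N_{\varphi}$ themselves: these are continuous, uniformly bounded on $\XX\subset\mathcal{M}_{C,V}$, stable under conjugation since $\overline{N_{\varphi}}=N_{\overline{\varphi}}$, and they separate points of $\XX$ for the trivial reason that a measure is determined by its pairings with $C_{\mathsf{c}}(\RR^{d})$. Each $N_{\varphi}$ lies in the pure point subspace by the hypothesis on $\widehat{\gamma}$, and the remaining lemma --- that bounded functions with pure point spectral measure form an algebra --- is proved via the characterisation of the pure point subspace as the set of $f$ whose orbit $\{T_{t}f\}$ is relatively compact in $L^{2}$, a property preserved under products of uniformly bounded functions. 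With that substitution, your argument, including the identification of the eigenvalue group with the subgroup generated by $S^{}_{\mathrm{FB}}$, goes through.
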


This result shows that the spectrum of the dynamical system is
completely determined by the set of pure points or `atoms' of the
diffraction (the Fourier--Bohr spectrum) if one has pure point
diffraction. This does not mean, however, that the diffraction measure
is a complete spectral invariant. More precisely, it can happen that
there are eigenvalues of the system which do not appear in the
Fourier--Bohr spectrum (though our results show that they must appear
in the Fourier--Bohr spectrum of suitable factors).  Such points are
known as \textit{extinctions}; compare \cite{LM,TAO}.  However, even
in the presence of extinctions, it is still possible to construct a
spectral invariant out of the diffraction $\widehat{\gamma}$ as
follows. Choose a strictly positive Schwartz function $h$ on $\RR^{d}$
such that
\[
   \nu^{}_h  \, := \, h \ts \widehat{\gamma}
\]
is a probability measure on $\RR^{d}$. Such a choice is always
possible, as $\widehat{\gamma}$ is a translation bounded measure by
general principles \cite{BF,TAO}. We note in passing that
$\nu^{}_{\nts h}$ is a spectral measure of $(\XX,\RR^{d},\mu)$ by
Remark~\ref{rem:spec-meas}. Consider now the $n$-fold convolution
 \[
    \nu_h^{*n} \, := \, \nu^{}_h  \ast \cdots
    \ast   \nu^{}_h
\]
of $\nu^{}_h$ with itself for any natural number $n$, where
$\nu_{h}^{*1} = \nu^{}_{h}$. Then, Theorem~\ref{thm:gen-thm}
together with some basic facts on convolutions shows that the family
$\bigl\{ \nu_h^{*n} \bigr\}_{n\in \NN}$ is a complete spectral
invariant.

In some cases, it is known that there exists an natural number $N$
such that any eigenvalue can be expressed as a sum of no more than $N$
elements of the Fourier--Bohr spectrum; see \cite{LM} for a detailed
discussion of this phenomenon. In this case, when $0$ is an element of
the Fourier--Bohr spectrum (which is always true for the
autocorrelation of the standard Dirac comb of a Delone set),
$\nu_h^{*N}$ alone forms a complete spectral invariant.

{}For a TMDS that is based on an FLC Delone set, one can obtain
further information on the extinctions. Our above results, and
Theorem~\ref{thm:gen-thm} in particular, show that the diffraction
measures of all derived factors (all being FLC point sets here) form a
complete spectral invariant. We may conclude that, for any extinction
point of the original system, there is an FLC point set factor which
covers this extinction via its Fourier--Bohr spectrum. This is
interesting for pure point diffractive Delone sets that are known to
possess no non-trivial Delone factors (up to topological conjugacy),
except (possibly) periodic ones \cite{CDP}. In such a situation, all
extinctions are related to periodic factors (if any exists at all) or
to topologically conjugate point sets. Here, one has to bear in mind
that the Fourier--Bohr spectrum is \emph{not} an invariant of
topological conjugacy, while the dynamical spectrum clearly is.
\smallskip

An interesting example is provided by the silver mean point set
$\varLambda$ in its formulation as a regular model set; compare
\cite{BL-2} as well as \cite[Sec.~9.3]{TAO}. It gives rise to a
uniquely ergodic dynamical system.  When the elementary distances are
$1$ (short) and $\lambda = 1 + \sqrt{2}\,$ (long), the diffraction
measure of the canonical Dirac comb with point masses of weight $1$ on
every point of $\varLambda$ reads
\[
    \widehat{\gamma^{}_{\varLambda}} \, = \,
    \sum_{k\in {L}_{}^{\circledast}} I^{}_{\!\varLambda}(k) \, \delta^{}_{k} \ts ,
\]
where the Fourier module $L_{}^{\circledast} = \frac{\sqrt{2}}{4}\,
\ZZ[\sqrt{2}\,]$ coincides with the dynamical spectrum (which is
pure point in this case), while the extinction set is
\begin{equation}\label{eq:sext}
   S_{\mathrm{ext}} \, = \, 
   \{ k \in L_{}^{\circledast} \mid I^{}_{\!\varLambda}(k) = 0 \}
    \, = \, \{ k \in L_{}^{\circledast} \mid k^{\star} =
    \tfrac{m}{\sqrt{2}} \text{ for some } 0 \ne k \in \ZZ \ts \} \ts ,
\end{equation}
see \cite[Rem.~9.10]{TAO} for the details. Clearly, the Fourier
module satisfies $\lambda L_{}^{\circledast} = L_{}^{\circledast}$.

Now, keeping all points from $\varLambda$ at the beginning of a long
interval (and deleting the others) defines a derived factor, as
this map works for the continuous hulls and commutes with
translation. The resulting point sets are silver mean chains
on a larger scale, obtained from the original one by multiplication
with $\lambda$, which simply reflects the local inflation symmetry
of this aperiodic example. Consequently, the factor is actually
locally equivalent (MLD; see \cite{TAO} for details) and thus
topologically conjugate. Interestingly, since we have
\[
    I^{}_{\nts\lambda \varLambda}(k) \, = \,
    I^{}_{\!\varLambda} (\lambda k)\ts ,
\]
the new intensities are non-zero on all points of the original set
$S_{\mathrm{ext}}$ from Eq.~\eqref{eq:sext}, while the extinctions are
now located at the set $S_{\mathrm{ext}}/\lambda$, which had no
extinctions for the original diffraction measure. So, the two
diffraction measures constitute a complete spectral invariant in this
case.

Alternatively, one may view the previous example as a weighted model
set, with different weights for points at the beginning of short or
long intervals. A generic choice of these weights will lead
to a diffraction measure without any extinctions on $L_{}^{\circledast}$,
which is then a complete spectral invariant.

\section*{Acknowledgements}

It is a pleasure to thank Uwe Grimm, E.\ Arthur Robinson and Tom Ward
for helpful discussions, and David Damanik and an anonymous reviewer
for useful hints on the manuscript. This work was supported by the
German Research Foundation (DFG), within the CRC 701.

\bigskip

\end{document}